\documentclass[12pt]{amsart}
\newtheorem{theorem}{Theorem}[section]
\newtheorem{lemma}[theorem]{Lemma}

\newtheorem{proposition}[theorem]{Proposition}
\theoremstyle{definition}
\newtheorem{definition}[theorem]{Definition}
\newtheorem{example}[theorem]{Example}

\usepackage{enumerate}
\usepackage{hyperref}
\newtheorem{remark}[theorem]{Remark}
\usepackage[margin=0.75in]{geometry}
\usepackage{calligra}
\usepackage{xcolor, soul}
\sethlcolor{yellow}
\usepackage{mathtools}

\numberwithin{equation}{section}

\DeclareMathOperator\tr{tr}
\def\N{{\mathbb{N}}}
\def\h{{\mathcal{H}}}

\def\s{{\sigma}}
\def\e{{\eta}}
\def\dj{{D_{2^{-j}}}}
\def\D{{D_{2^{m+j}}}}

\def\p{{\phi}}
\def\vv{{V^{t}}}
\def\a{{(T^{t}_{(k,l)})^{2^{-2j}}}}


\newcommand\numberthis{\addtocounter{equation}{1}\tag{\theequation}}

\newcommand{\be}{\begin{equation}}
	\newcommand{\ee}{\end{equation}}

\begin{document}
	\title{Discrete Calder\'{o}n condition for the twisted wavelet system}
	
	\author{Radha Ramakrishnan}
	\address{Department of Mathematics, Indian Institute of Technology, Madras, India}
	\email{radharam@iitm.ac.in}
	\author{Rabeetha Velsamy}
	\address{Department of Mathematics, Indian Institute of Technology, Madras, India}
	\email{rabeethavelsamy@gmail.com}
	
	\subjclass{Primary 42C40; Secondary 43A30}
	
	
	
	\keywords{Dilation, Heisenberg group, twisted translation, wavelets, Weyl transform}
	\begin{abstract}
	In this paper, we obtain an analogue of the discrete Calder\'{o}n condition and prove that this condition is sufficient for an orthonormal twisted wavelet system to be complete in $L^{2}(\mathbb{R}^{2})$.
	\end{abstract}
	\maketitle
	\section{Introduction}
	It is well known in literature that if $\psi\in L^{2}(\mathbb{R})$ such that the wavelet system $\{\psi_{j,k} : k,j\in \mathbb{Z}\}$ is orthonormal and satisfies the ``discrete Calder\'{o}n condition'', $\sum\limits_{j\in \mathbb{Z}}|\widehat{\psi}(2^{j}\xi)|^{2}=1$ for a.e. $\xi\in \mathbb{R}$, then it is complete in $L^{2}(\mathbb{R}),$ where $\psi_{j,k}(x)=2^{\frac{j}{2}}\psi(2^{j}x-k)$, $j,k\in \mathbb{Z}.$ This was conjectured by Guido Weiss. (Infact, the conjecture is that the ``discrete Calder\'{o}n condition'' is both necessary as well as sufficient for the completion of an orthonormal wavelet system). Yves Meyer claimed that the completion of a wavelet system is equivalent to the discrete Calder\'{o}n condition and $\sum\limits_{j=0}^{\infty}\widehat{\psi}(2^{j}\xi)\overline{\widehat{\psi}(2^{j}(\xi+q))}=0,$ which holds for a.e. $\xi\in \mathbb{R}$ and all $q\in 2\mathbb{Z}+1$. Later, these questions were investigated by several authors in various settings. For the history and the references, we refer to \cite{zethesis}.\\
	
	 In 1995, G. Gripenberg \cite{Gr} and X. Wang \cite{wa} independently proved that for $\psi\in L^{2}(\mathbb{R}),$ the system $\{\psi_{j,k} : j,k\in \mathbb{Z}\}$ forms an orthonormal basis for $L^{2}(\mathbb{R})$ iff $\|\psi\|_{L^{2}(\mathbb{R})}=1,$ $\psi$ satisfies discrete Calder\'{o}n condition and $\sum\limits_{j=0}^{\infty}\widehat{\psi}(2^{j}\xi)\overline{\widehat{\psi}(2^{j}(\xi+q))}=0,$ $q\in 2\mathbb{Z}+1$, for a.e. $\xi\in \mathbb{R}$. In 2001, Rzeszotnik \cite{ze}, proved that the discrete Calder\'{o}n condition characterizes the completeness of an orthonormal wavelet basis. For a study of wavelets in $L^{2}(\mathbb{R})$, we refer to \cite{weiss}\\
	
	The aim of this paper is to obtain an analogue of the discrete Calder\'{o}n condition in $L^{2}(\mathbb{R}^{2})$ for a system of twisted wavelets and investigate whether discrete Calder\'{o}n condition is sufficient to prove the completeness of an orthonormal twisted wavelet system in $L^{2}(\mathbb{R}^{2})$. The twisted convolution is a non-standard convolution which arises while transferring the convolution of the Heisenberg group to the complex plane. Under this operation of twisted convolution, $L^{1}(\mathbb{R}^{2})$ turns out to be a non commutative Banach algebra. Hence the study of (twisted) shift-invariant spaces on $\mathbb{R}^{2}$ completely differs from the perspective of the usual shift-invariant spaces on $\mathbb{R}$.\\
	
	The twisted wavelet system is formed using the usual dilations and twisted translations in $L^{2}(\mathbb{R}^{2})$. More precisely, the twisted wavelet system is defined to be $\{D_{2^{j}}(T^{t}_{(k,l)})^{2^{-2j}}\psi_{j} : k,l,j\in \mathbb{Z}\},$ where $D_{a}$ denotes the usual dilation in $L^{2}(\mathbb{R}^{2})$ and $(T^{t}_{(k,l)})^{\lambda}$ denotes the $\lambda-$twisted translation as defined in Definition \ref{lamtwistdef} in this paper. Notice that instead of taking a single function $\psi$, we take a collection of functions $\{\psi_{j}\}_{j\in \mathbb{Z}}$. This is due to the following fact. In the case of the Heisenberg group $\mathbb{H},$ the wavelet system is defined using the left translates and the non-isotropic dilations on the Heisenberg group (see \cite{shannon}). In other words, the wavelet system is defined to be $\{\delta_{2^{j}}L_{\gamma}\psi\}_{j,\gamma}.$ Here $L_{\gamma}$ denotes the left translate by $\gamma$ belonging to a lattice $\Gamma$ in $\mathbb{H}$ and for $a>0$, $\delta_{a}$ denotes the non-isotropic dilation defined by $\delta_{a}\psi(x,y,t)=|a|^{2}\psi(ax,ay,a^{2}t),~~(x,y,t)\in \mathbb{H}$. But a standard lattice in $\mathbb{H}$ is given by $\{(2k,l,m) : k,l,m\in \mathbb{Z}\}.$ By transferring this structure of wavelets to the complex plane, we obtain the system $\{D_{2^{j}}(T^{t}_{(k,l)})^{2^{-2j}}\psi_{j} : k,l,j\in \mathbb{Z}\}$. (See Lemma 3.6 in \cite{arati_orthonormality}).\\
		
	 Since the aim of this paper deals with orthonormal twisted wavelet system, we first provide an example in section 3. Then we obtain an analogue of the discrete Calder\'{o}n condition for this system, which turns out to be $\sum\limits_{j\in\mathbb{Z}}2^{j}\|K^{2^{-2j}}_{\psi_{j}}(\cdot,2^{j}\eta)\|^{2}=1$, for a.e. $\eta\in \mathbb{R}$, where $K_{\psi_{j}}$ denotes the kernel of the Weyl transform of $\psi_{j}$. As mentioned earlier, we wish to explore whether this condition leads to the completion of an orthonormal twisted wavelet system. However, we are able to answer our question affirmatively only for the modified wavelet system defined by $\{D_{2^{j+m}}(T^{t}_{(k,l)})^{2^{-2j}}\psi_{j} : k,l,m\in \mathbb{Z},j\geq 0\}.$ This is due to the following fact. In the classical case, if $W_{j}=\overline{\text{span}\{\psi_{j,k} :k\in \mathbb{Z}\}}$ and $V=\bigg(\bigoplus\limits_{j\geq 0}W_{j}\bigg)^{\perp},$ then one can show that $D_{2^{i}}(V)=\bigg(\bigoplus\limits_{j\geq i}W_{j}\bigg)^{\perp}.$ This property does not hold if we take $W_{j}=\overline{\text{span}\{D_{2^{j}}(T_{(k,l)})^{2^{-2j}}\psi_{j} :k,l\in \mathbb{Z}\}}.$ This is because in the classical case, the translates in a wavelet system are independent of dilation factors. In the case of twisted wavelets, the twisted translates also contain $j$ factor arising from dilations. This problem is resolved if we make use of a modified twisted wavelet system. One can ask why not we define the twisted wavelet system by taking only the operations $D_{2^{j}}T^{t}_{(k,l)}$ and define $W_{j}=\overline{\text{span}}\{D_{2^{j}}T_{(k,l)}\psi :k,l\in \mathbb{Z}\}.$ But $W_{j}$ is not a twisted shift invariant space. In fact, $T^{t}_{(k^{'},l^{'})}D_{2^{j}}T^{t}_{(k,l)}=D_{2^{j}}(T^{t}_{(2^{j}k^{'},2^{j}l^{'})})^{2^{-2j}}T^{t}_{(k,l)},$ which does not belong to $W_{j}$. Thus we end up with the modified twisted wavelet system, defined by $\{D_{2^{j+m}}(T_{(k,l)})^{2^{-2j}}\psi_{j} :k,l,m\in \mathbb{Z},j\geq 0\}$.   
	\section{Preliminaries}
	Let $\h\neq 0$ be a separable Hilbert space.
	\begin{definition}
		A sequence $\{f_{k} : k\in\mathbb{Z}\}$ in $\h$ is called a frame for $\h$ if there exist two constants $A,B>0$ such that
		\begin{align*}
			A\|f\|^{2}\leq \sum_{k\in \mathbb{Z}}|\langle f,f_{k}\rangle|^{2} \leq B\|f\|^{2},\hspace{10mm}\forall\ f\in \h.\numberthis \label{frm}
		\end{align*}
	\end{definition}
	If \eqref{frm} holds with $A=B=1$, then $\{f_{k} : k\in \mathbb{Z}\}$ is called a Parseval frame.\\
	
		We refer to \cite{CN,Heilbook} for the study of frames.\\
		
		The Heisenberg group $\mathbb{H}$ is a nilpotent Lie group whose underlying manifold is $\mathbb{R}\times\mathbb{R}\times\mathbb{R}$ with the group operation defined by $(x,y,t)(u,v,s)=(x+u,y+v,t+s+\frac{1}{2}(u\cdot y-v\cdot x))$ and the Haar measure is the Lebesgue measure $dx\,dy\,dt$ on $\mathbb{R}\times\mathbb{R}\times\mathbb{R}$. Using the Schr\"{o}dinger representation $\pi_{\lambda},$ $\lambda\in\mathbb{R}^{\ast},$ given by
		\begin{equation*}
			\pi_{\lambda}(x,y,t)\p(\xi)=e^{2\pi i\lambda t}e^{2\pi i\lambda(x\cdot\xi+\frac{1}{2}x\cdot y)}\p(\xi+y),~\p\in L^{2}(\mathbb{R}),
		\end{equation*}
		we define the group Fourier transform of $f\in L^{1}(\mathbb{H})$ as
		\begin{equation*}
			\widehat{f}(\lambda)=\int_{\mathbb{H}}f(x,y,t)\pi_{\lambda}(x,y,t)\,dx\, dy\,dt,~\text{where}~\lambda\in\mathbb{R}^{\ast},
		\end{equation*}
		which is a bounded operator on $L^{2}(\mathbb{R})$. In otherwords, for $\p\in L^{2}(\mathbb{R})$, we have
		\begin{equation*}
			\widehat{f}(\lambda)\p=\int_{\mathbb{H}}f(x,y,t)\pi_{\lambda}(x,y,t)\p\,dx\, dy\, dt,
		\end{equation*}
		where the integral is a Bochner integral taking values in $L^{2}(\mathbb{R}).$ If $f$ is also in $L^{2}(\mathbb{H})$, then $\widehat{f}(\lambda)$ is a Hilbert-Schmidt operator. Define
		\begin{equation*}
			f^{\lambda}(x,y)=\int_{\mathbb{R}}e^{2\pi i\lambda t}f(x,y,t)\,dt,
		\end{equation*}
		which is the inverse Fourier transform of $f$ in the $t$ variable. Then we can write
		\begin{equation*}
			\widehat{f}(\lambda)=\int_{\mathbb{R}^{2}}f^{\lambda}(x,y)\pi_{\lambda}(x,y,0)dx\, dy.
		\end{equation*}
		Let $g\in L^{1}(\mathbb{R}^{2})$. Define
		\begin{equation*}
			W_{\lambda}(g)=\int_{\mathbb{R}^{2}}g(x,y)\pi_{\lambda}(x,y,0)\,dx\, dy.
		\end{equation*} The transform $W_{\lambda}(g)$ is an integral operator with kernel \begin{equation*}
		K^{\lambda}_{g}(\xi,\eta)=\int_{\mathbb{R}}g(x,\eta-\xi)e^{\pi i\lambda x\cdot(\eta+\xi)}\, dx,~ \xi,\eta\in \mathbb{R}.
	\end{equation*}
		When $\lambda=1$, it is called the Weyl transform of $g$, denoted by $W(g)$. If $g\in L^{1}\cap L^{2}(\mathbb{R}^{2})$, then $K^{\lambda}_{g}\in L^{2}(\mathbb{R}^{2})$, which implies that $W_{\lambda}(g)$ is a Hilbert-Schmidt operator whose norm is given by $\|W_{\lambda}(g)\|_{\mathcal{B}_{2}}=\|K^{\lambda}_{g}\|_{L^{2}(\mathbb{R}^{2})}=\frac{1}{|\lambda|^{\frac{1}{2}}}\|g\|_{L^{2}(\mathbb{R}^{2})}$, where $\mathcal{B}_{2}$ is the Hilbert space of Hilbert-Schmidt operators on $L^{2}(\mathbb{R})$ with inner product $(T,S)=\tr(TS^{\ast})$.\\
		
		For a detailed study of analysis on the Heisenberg group we refer to \cite{thangavelu,follandphase}.\\
		
\begin{definition}\label{lamtwistdef}
	Let $f\in L^{2}(\mathbb{R}^{2})$ and $\lambda\in \mathbb{R}^{\ast}$. Then, for $k,l\in \mathbb{Z}$, $\lambda-$twisted translation of $f$, denoted by $(T^{t}_{(k,l)})^{\lambda}f$, is defined to be \begin{equation*}
		(T^{t}_{(k,l)})^{\lambda}f(x,y)=e^{\pi i\lambda(x\cdot l-y\cdot k)}f(x-k,y-l).
	\end{equation*}
\end{definition}
		The $\lambda-$ twisted translation satisfies the following property.
	\begin{align*}
		(T_{(k_1,l_1)}^t)^{\lambda}(T_{(k_2,l_2)}^t)^{\lambda}=e^{-\pi i\lambda(k_1\cdot l_2-l_1\cdot k_2)}(T_{(k_1+k_2,l_1+l_2)}^t)^{\lambda},\ \ \ \ \ \forall\ (k_1,l_1),(k_2,l_2)\in\mathbb{Z}^{2}.\numberthis  \label{comptsttrnsl}
	\end{align*}\\
When $\lambda=1$, then $T^{t}_{(k,l)}f$ is called the twisted translation of $f$.\\

For a further study of $\lambda-$translates, we refer to \cite{RHG}\\

A closed subspace $V\subset L^{2}(\mathbb{R}^{2})$ is said to be a twisted shift-invariant space if $f\in V$, then $T^{t}_{(k,l)}f\in V$, for any $k,l\in \mathbb{Z}.$\\

Let $V^{t}(\phi)$ denote the principal twisted shift-invariant space, defined by  $V^{t}(\phi)=\overline{\text{span}\{T^{t}_{(k,l)}\p : k,l\in \mathbb{Z}\}}$.

\begin{theorem}\label{decomposition}\cite{rabeetha_3}
	If $V$ is a twisted shift-invariant subspace in $L^{2}(\mathbb{R}^{2})$, then there exists a collection of functions $\{\phi_{i}\}_{i\in \N}$ in $L^{2}(\mathbb{R}^{2})$ such that $V=\bigoplus\limits_{i\in\N}\vv(\phi_{i})$, where each $\phi_{i}$ is a Parseval frame generator of $\vv(\phi_{i})$, for $i\in\N$.  Moreover, if $f\in V$, then $\|f\|^{2}_{2}=\sum\limits_{i\in \mathbb{N}}\|r_{i}\|^{2}_{L^{2}(\mathbb{T}\times \mathbb{T}; \left[\phi_{i},\phi_{i}\right])},$ where $r_{i}\in L^{2}(\mathbb{T}\times \mathbb{T}; \left[\phi_{i},\phi_{i}\right]).$
\end{theorem}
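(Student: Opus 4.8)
The plan is to transport the problem to a fibre space in which the twisted translations act by multiplication, and then to run the range‑function argument familiar from the classical structure theory of shift‑invariant spaces (Helson, de Boor--DeVore--Ron, Bownik), adapted to the twisted setting. First I would pass from $f\in L^{2}(\mathbb{R}^{2})$ to the kernel $K_{f}$ of its Weyl transform $W(f)$; since $\|K_{f}\|_{L^{2}(\mathbb{R}^{2})}=\|f\|_{L^{2}(\mathbb{R}^{2})}$ and every Hilbert--Schmidt kernel arises this way, $f\mapsto K_{f}$ is a unitary of $L^{2}(\mathbb{R}^{2})$. A direct computation from the intertwining $W(T^{t}_{(k,l)}f)=\pi_{1}(k,l,0)W(f)$ gives the kernel identity
\[
K_{T^{t}_{(k,l)}f}(\xi,\eta)=e^{2\pi i(k\xi+\frac{1}{2}kl)}K_{f}(\xi+l,\eta),
\]
so the twisted translation shifts and modulates only the first kernel variable $\xi$, leaving $\eta$ untouched. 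Applying the Zak transform in the $\xi$ variable then yields a unitary $\Phi\colon L^{2}(\mathbb{R}^{2})\to L^{2}(\mathbb{T}\times\mathbb{T};L^{2}(\mathbb{R}))$ under which $T^{t}_{(k,l)}$ becomes multiplication by the unimodular symbol $m_{(k,l)}(s,w)=e^{\pi i kl}e^{2\pi i(ks+lw)}$; one checks that the $m_{(k,l)}$ obey exactly the composition law \eqref{comptsttrnsl} and that they form an orthonormal basis of $L^{2}(\mathbb{T}\times\mathbb{T})$.

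With this fibre picture in hand, I would show that a closed subspace $V$ is twisted shift‑invariant if and only if $\Phi(V)$ is invariant under multiplication by every $m_{(k,l)}$, hence, by density of trigonometric polynomials together with closedness, under multiplication by all of $L^{\infty}(\mathbb{T}\times\mathbb{T})$. Such multiplicatively invariant subspaces are described by a measurable range function $(s,w)\mapsto J(s,w)$, a measurable field of closed subspaces of the fibre $L^{2}(\mathbb{R})$, via $\Phi(V)=\{F : F(s,w)\in J(s,w)\ \text{for a.e.}\ (s,w)\}$. For a single generator this specialises cleanly: every element of $\Phi(\vv(\phi))$ has the form $r(s,w)\,\Phi\phi(s,w,\cdot)$ with $r\in L^{2}(\mathbb{T}\times\mathbb{T})$, whence
\[
\|f\|_{2}^{2}=\int_{\mathbb{T}\times\mathbb{T}}|r(s,w)|^{2}\,[\phi,\phi](s,w)\,ds\,dw,\qquad [\phi,\phi](s,w):=\|\Phi\phi(s,w,\cdot)\|_{L^{2}(\mathbb{R})}^{2},
\]
and $\phi$ is a Parseval frame generator of $\vv(\phi)$ exactly when $[\phi,\phi]$ is $\{0,1\}$‑valued a.e.

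To produce the generators, I would invoke a measurable selection theorem to choose, for a.e. $(s,w)$, an orthonormal basis $\{e_{i}(s,w)\}_{i\in\N}$ of $J(s,w)$ depending measurably on $(s,w)$, with $e_{i}(s,w)=0$ once $i$ exceeds $\dim J(s,w)$. Setting $\Phi\phi_{i}(s,w,\cdot):=e_{i}(s,w)$ and $\phi_{i}:=\Phi^{-1}e_{i}$ gives functions in $L^{2}(\mathbb{R}^{2})$ whose brackets $[\phi_{i},\phi_{i}](s,w)=\|e_{i}(s,w)\|^{2}=\mathbf{1}_{\{\dim J(s,w)\ge i\}}$ are $\{0,1\}$‑valued, so each $\phi_{i}$ is a Parseval frame generator of $\vv(\phi_{i})$. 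Fibrewise orthogonality and completeness of $\{e_{i}(s,w)\}_{i}$ in $J(s,w)$ translate into $V=\bigoplus_{i\in\N}\vv(\phi_{i})$. Finally, for $f\in V$, expanding $\Phi f(s,w,\cdot)=\sum_{i}r_{i}(s,w)e_{i}(s,w)$ with $r_{i}(s,w)=\langle\Phi f(s,w,\cdot),e_{i}(s,w)\rangle$ and using unitarity of $\Phi$ gives $\|f\|_{2}^{2}=\sum_{i}\int_{\mathbb{T}\times\mathbb{T}}|r_{i}|^{2}\,ds\,dw=\sum_{i}\|r_{i}\|^{2}_{L^{2}(\mathbb{T}\times\mathbb{T};[\phi_{i},\phi_{i}])}$, the asserted norm identity.

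The main obstacle is the range‑function correspondence together with the measurable selection of the bases $\{e_{i}(s,w)\}$: one must set up the theory of measurable fields of Hilbert spaces and direct integrals carefully in the twisted fibre $L^{2}(\mathbb{R})$, verify measurability of $(s,w)\mapsto J(s,w)$, and ensure the chosen bases are jointly measurable so that the $\phi_{i}$ are genuine $L^{2}(\mathbb{R}^{2})$ functions. A secondary technical point is bookkeeping the projective phase $e^{\pi i kl}$ in $m_{(k,l)}$; it is precisely this factor that makes $\{m_{(k,l)}\}$ realise the non‑commutative law \eqref{comptsttrnsl} rather than an honest character of $\mathbb{Z}^{2}$, and one must confirm that it obstructs neither the orthonormal‑basis property on $\mathbb{T}\times\mathbb{T}$ nor the passage from invariance under the $m_{(k,l)}$ to invariance under $L^{\infty}(\mathbb{T}\times\mathbb{T})$.
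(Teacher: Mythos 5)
Your proposal is correct and follows essentially the route the authors themselves take: your fibre map $\Phi$ is exactly the Weyl--Zak transform $Z_{W}$ that the paper introduces (your kernel identity and symbol $m_{(k,l)}$ reproduce \eqref{zaktwist}), and the range-function/measurable-selection argument is the standard Helson--Bownik machinery that the cited source \cite{rabeetha_3} adapts to this setting. Note only that the paper itself states Theorem \ref{decomposition} without proof, importing it from \cite{rabeetha_3}, so the comparison is with that reference rather than with an argument in the present text; your single-generator step is precisely Proposition \ref{propo} and the Parseval criterion of Theorem \ref{par}.
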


In \cite{rabeetha_2}, we introduced Zak transform on $ L^{2}(\mathbb{R}^{2})$ associated with the Weyl transform, which is defined as follows : For $\p\in L^{2}(\mathbb{R}^{2})$, \[Z_{W}\p(\xi,\xi^{'},\e)=\sum\limits_{m\in \mathbb{Z}}K_{\p}(m+\xi,\e)e^{-2\pi im\cdot \xi^{'}},\hspace{3mm}\xi,\xi^{'}\in\mathbb{T},\e\in \mathbb{R}.\] Then $Z_{W}$ turns out to be an isometric isomorphism between $L^{2}(\mathbb{R}^{2})$ and $L^{2}(\mathbb{T}\times \mathbb{T}\times \mathbb{R})$. The operator $Z_{W}$ is called the Weyl-Zak transform and it acts on twisted translation in the following way. \[Z_{W}T^{t}_{(k,l)}\p(\xi,\xi^{'},\e)=e^{2\pi i(k\cdot \xi+l\cdot \xi^{'})}e^{\pi ik\cdot l}Z_{W}\p(\xi,\xi^{'},\e),\hspace{3mm}k,l\in \mathbb{Z},\xi,\xi^{'}\in \mathbb{T},\e\in \mathbb{R}.\numberthis\label{zaktwist}\]

By using the Plancherel theorem for the Fourier series, we get the following relation between the Weyl Zak transform $Z_{W}$ of $\phi$ and the kernel of the Weyl transform of $\phi$. 
\begin{equation}\label{zakker}
	\int_{\mathbb{T}}\int_{\mathbb{T}}|Z_{W}\phi(\xi,\xi^{'},\eta)|^{2}\,d\xi d\xi^{'}=\int_{\mathbb{R}}|K_{\phi}(\xi,\eta)|^{2}\, d\xi.
\end{equation}
 The bracket map $\left[\p,\phi\right]$, for $\phi\in L^{2}(\mathbb{R}^{2})$, is defined to be \[\left[\p,\phi\right](\xi,\xi^{'})=\int_{\mathbb{R}}Z_{W} \p(\xi,\xi^{'},\e)\overline{Z_{W} \phi(\xi,\xi^{'},\e)}\,d\e.\] which satisfies \[\|\left[\p,\phi\right]\|_{L^{1}(\mathbb{T}\times\mathbb{T})}\leq \|\p\|_{L^{2}(\mathbb{R}^{2})}\|\phi\|_{L^{2}(\mathbb{R}^{2})}.\] We refer \cite{rabeetha_2} for further details. We make use of the following results proved in \cite{rabeetha_2} in the sequel.

\begin{proposition}\label{propo}
	Let $\p\in L^{2}(\mathbb{R}^{2})$. Then $f\in V^{t}(\p)$ if and only if
	\begin{equation*}
		Z_{W}f(\xi,\xi^{'},\e)=r(\xi,\xi^{'})Z_{W}\p(\xi,\xi^{'},\e),~\text{for a.e}~\xi,\xi^{'}\in \mathbb{T}, \e\in \mathbb{R},
	\end{equation*}
	for some $r\in L^{2}(\mathbb{T} \times \mathbb{T};\left[\p,\p\right])$, where $\left[\p,\p\right](\xi,\xi^{'})=\int_{\mathbb{R}}|Z_{W}\phi(\xi,\xi^{'},\eta)|^{2}\, d\eta.$
\end{proposition}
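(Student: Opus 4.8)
The plan is to transport the entire statement across the Weyl--Zak transform $Z_W$, which is a unitary from $L^{2}(\mathbb{R}^{2})$ onto $L^{2}(\mathbb{T}\times\mathbb{T}\times\mathbb{R})$, and thereby reduce the description of $V^{t}(\phi)$ to a pure multiplication condition. By \eqref{zaktwist}, for each $k,l\in\mathbb{Z}$ the twisted translate satisfies $Z_{W}T^{t}_{(k,l)}\phi=m_{k,l}\,Z_{W}\phi$, where $m_{k,l}(\xi,\xi')=e^{2\pi i(k\cdot\xi+l\cdot\xi')}e^{\pi i k\cdot l}$ is unimodular and independent of $\eta$. Since $\{e^{2\pi i(k\cdot\xi+l\cdot\xi')}\}_{k,l}$ is an orthonormal basis of $L^{2}(\mathbb{T}\times\mathbb{T})$ and the extra factor $e^{\pi i k\cdot l}$ is a constant of modulus one, passing to finite linear combinations shows that $Z_{W}$ carries the span of the twisted translates onto the set of products $p\,Z_{W}\phi$ with $p$ a trigonometric polynomial. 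I would then define $M:=\{r\,Z_{W}\phi : r\in L^{2}(\mathbb{T}\times\mathbb{T};[\phi,\phi])\}$ and aim to prove $Z_{W}(V^{t}(\phi))=M$, which is exactly the asserted equivalence once it is read back through the unitary $Z_{W}$.

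The first thing to record is that the map $r\mapsto r\,Z_{W}\phi$ is an isometry of $L^{2}(\mathbb{T}\times\mathbb{T};[\phi,\phi])$ onto $M$: by Tonelli's theorem and the definition $[\phi,\phi](\xi,\xi')=\int_{\mathbb{R}}|Z_{W}\phi(\xi,\xi',\eta)|^{2}\,d\eta$, one computes $\|r\,Z_{W}\phi\|_{L^{2}(\mathbb{T}\times\mathbb{T}\times\mathbb{R})}^{2}=\int_{\mathbb{T}\times\mathbb{T}}|r|^{2}[\phi,\phi]$. In particular $M$ is well defined and, being the isometric image of a complete space, is a closed subspace. The inclusion $Z_{W}(V^{t}(\phi))\subseteq M$ is then immediate: each generator maps to $m_{k,l}\,Z_{W}\phi\in M$ because $m_{k,l}$ is bounded, hence lies in $L^{2}(\mathbb{T}\times\mathbb{T};[\phi,\phi])$; finite combinations stay in $M$, and since $M$ is closed the whole closed span lands in $M$.

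For the reverse inclusion $M\subseteq Z_{W}(V^{t}(\phi))$ I would take $r\in L^{2}(\mathbb{T}\times\mathbb{T};[\phi,\phi])$ and approximate $r$ in the weighted norm by trigonometric polynomials $p_{n}$; then $p_{n}\,Z_{W}\phi\to r\,Z_{W}\phi$ in $L^{2}(\mathbb{T}\times\mathbb{T}\times\mathbb{R})$ by the isometry above, while each $p_{n}\,Z_{W}\phi$ equals $Z_{W}$ of a finite linear combination of twisted translates (absorbing the constants $e^{\pi i k\cdot l}$), hence lies in the closed subspace $Z_{W}(V^{t}(\phi))$, which is closed because $Z_{W}$ is unitary and $V^{t}(\phi)$ is closed. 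Passing to the limit gives $r\,Z_{W}\phi\in Z_{W}(V^{t}(\phi))$. The one point that needs genuine care, and which I expect to be the crux, is precisely this density of trigonometric polynomials in $L^{2}(\mathbb{T}\times\mathbb{T};[\phi,\phi])$: the weight $[\phi,\phi]$ lies in $L^{1}(\mathbb{T}\times\mathbb{T})$ but may vanish on a set of positive measure, so the argument must be run with respect to the finite Borel measure $d\mu=[\phi,\phi]\,d\xi\,d\xi'$. Here I would invoke Stone--Weierstrass to get trigonometric polynomials dense in $C(\mathbb{T}\times\mathbb{T})$ in the uniform norm, together with the density of continuous functions in $L^{2}(\mu)$ for a finite measure, and combine the two, using that uniform convergence forces $L^{2}(\mu)$-convergence since $\mu$ is finite. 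With this density step settled, the equality $Z_{W}(V^{t}(\phi))=M$ holds, and reading it back through $Z_{W}$ yields the proposition.
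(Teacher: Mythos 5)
The paper does not prove this proposition; it is imported verbatim from \cite{rabeetha_2}, so there is no in-paper argument to compare against. Your proof is correct and is the standard fiberization argument for principal (twisted) shift-invariant spaces: using \eqref{zaktwist} to turn twisted translates into multiplication by the unimodular exponentials $m_{k,l}$, showing $r\mapsto r\,Z_{W}\phi$ is an isometry of $L^{2}(\mathbb{T}\times\mathbb{T};[\phi,\phi])$ onto a closed subspace $M$, and closing the loop via density of trigonometric polynomials in $L^{2}(\mu)$ for the finite measure $d\mu=[\phi,\phi]\,d\xi\,d\xi'$. You correctly identify and handle the only delicate points, namely that $[\phi,\phi]$ may vanish on a set of positive measure (harmless, since $Z_{W}\phi$ vanishes there fiberwise, so $r\,Z_{W}\phi$ is independent of the representative of $r$) and that closedness of $M$ must come from the isometry rather than be assumed; I see no gap.
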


\begin{theorem}\label{par}
		Let $\p\in L^{2}(\mathbb{R}^{2})$. Then $\{T^{t}_{(k,l)}\phi : k,l\in \mathbb{Z}\}$ is a frame sequence with frame bounds $A,B>0$ iff \begin{equation*}
			0<A\leq \left[\phi,\phi\right](\xi,\xi^{'})\leq B<\infty,~\text{for a.e.}~(\xi,\xi^{'})\in \Omega_{\phi},
		\end{equation*}
	where $\Omega_{\phi}=\{(\xi,\xi^{'})\in \mathbb{T}\times \mathbb{T} :\left[\phi,\phi\right](\xi,\xi^{'})\neq 0\}.$
\end{theorem}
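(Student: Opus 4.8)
The plan is to transport the whole computation to the Weyl--Zak side, where, by \eqref{zaktwist}, twisted translation acts by multiplication by a character, and the frame sum collapses to an ordinary Fourier-series Parseval identity. Since $\{T^{t}_{(k,l)}\phi : k,l\in\mathbb{Z}\}$ being a \emph{frame sequence} means it is a frame for its closed span $V^{t}(\phi)$, I would test the frame inequalities only against $f\in V^{t}(\phi)$. First I would compute, using that $Z_{W}$ is an isometric isomorphism together with \eqref{zaktwist},
\begin{align*}
\langle f, T^{t}_{(k,l)}\phi\rangle &= \langle Z_{W}f, Z_{W}T^{t}_{(k,l)}\phi\rangle\\
&= e^{-\pi i k\cdot l}\int_{\mathbb{T}}\int_{\mathbb{T}} e^{-2\pi i(k\cdot\xi+l\cdot\xi')}\,[f,\phi](\xi,\xi')\,d\xi\,d\xi',
\end{align*}
where $[f,\phi](\xi,\xi')=\int_{\mathbb{R}}Z_{W}f(\xi,\xi',\eta)\overline{Z_{W}\phi(\xi,\xi',\eta)}\,d\eta$ is the cross bracket. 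Thus $|\langle f, T^{t}_{(k,l)}\phi\rangle|^{2}$ is the squared modulus of the $(k,l)$-th Fourier coefficient of $[f,\phi]$, the unimodular factor $e^{-\pi i k\cdot l}$ being harmless.

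The frame sum is therefore $\sum_{k,l}|\widehat{[f,\phi]}(k,l)|^{2}$. I would first note $[f,\phi]\in L^{1}(\mathbb{T}\times\mathbb{T})$, from Cauchy--Schwarz in $\eta$ and the bound $\|[\phi,\phi]\|_{L^{1}(\mathbb{T}\times\mathbb{T})}\le\|\phi\|_{2}^{2}$ recorded in the Preliminaries, so these coefficients are genuinely defined; whenever the sum is finite, Riesz--Fischer upgrades $[f,\phi]$ to an element of $L^{2}(\mathbb{T}\times\mathbb{T})$ and Parseval yields
\begin{equation*}
\sum_{k,l\in\mathbb{Z}}|\langle f, T^{t}_{(k,l)}\phi\rangle|^{2}=\int_{\mathbb{T}}\int_{\mathbb{T}}|[f,\phi](\xi,\xi')|^{2}\,d\xi\,d\xi'.
\end{equation*}
By Proposition \ref{propo}, every $f\in V^{t}(\phi)$ satisfies $Z_{W}f=r\,Z_{W}\phi$ for some $r\in L^{2}(\mathbb{T}\times\mathbb{T};[\phi,\phi])$, whence $[f,\phi]=r\,[\phi,\phi]$ and $\|f\|_{2}^{2}=\int_{\mathbb{T}}\int_{\mathbb{T}}|r|^{2}[\phi,\phi]$. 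The frame inequalities for $f$ then reduce to
\begin{equation*}
A\int_{\mathbb{T}}\int_{\mathbb{T}}|r|^{2}[\phi,\phi]\ \le\ \int_{\mathbb{T}}\int_{\mathbb{T}}|r|^{2}[\phi,\phi]^{2}\ \le\ B\int_{\mathbb{T}}\int_{\mathbb{T}}|r|^{2}[\phi,\phi],
\end{equation*}
required to hold for all admissible $r$, where the weight $|r|^{2}[\phi,\phi]$ is supported in $\Omega_{\phi}$.

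For sufficiency, given $A\le[\phi,\phi]\le B$ on $\Omega_{\phi}$, the upper bound gives $\int\int|r|^{2}[\phi,\phi]^{2}\le B\|f\|_{2}^{2}<\infty$, so $[f,\phi]\in L^{2}(\mathbb{T}\times\mathbb{T})$ and Parseval is legitimate; both displayed inequalities then follow by multiplying the pointwise bounds by $|r|^{2}[\phi,\phi]\ge 0$ and integrating. For necessity, the frame upper bound is a Bessel condition making the frame sum finite for every $f\in V^{t}(\phi)$, so Parseval applies, and I would test with $r=\chi_{E}$ for $E=\{(\xi,\xi')\in\Omega_{\phi}:[\phi,\phi]>B\}$, which lies in $L^{2}(\mathbb{T}\times\mathbb{T};[\phi,\phi])$ because $\int_{E}[\phi,\phi]\le\|[\phi,\phi]\|_{L^{1}}<\infty$; this forces $\int_{E}[\phi,\phi]([\phi,\phi]-B)\le 0$ and hence $|E|=0$, and the set $\{(\xi,\xi')\in\Omega_{\phi}:[\phi,\phi]<A\}$ is treated symmetrically. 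The main obstacle I anticipate is exactly this integrability bookkeeping---ensuring $[f,\phi]\in L^{2}(\mathbb{T}\times\mathbb{T})$ so that Parseval is valid (via Riesz--Fischer in one direction and the bound $[\phi,\phi]\le B$ in the other) and checking that the test multipliers confine all mass to $\Omega_{\phi}$---whereas the transfer through \eqref{zaktwist} and Proposition \ref{propo} is the structural backbone of the argument.
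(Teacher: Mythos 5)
Your proof is correct. Note that this paper does not actually prove Theorem \ref{par}; it is quoted from the reference \cite{rabeetha_2}, so there is no in-paper argument to compare against. Your route---transporting everything through the Weyl--Zak transform via \eqref{zaktwist}, identifying $\langle f,T^{t}_{(k,l)}\phi\rangle$ with the Fourier coefficients of the bracket $[f,\phi]=r\,[\phi,\phi]$, and then testing with characteristic-function multipliers $r=\chi_{E}$ supported where $[\phi,\phi]$ violates the bounds---is exactly the standard fiberization argument one expects the cited source to use, and your attention to the Riesz--Fischer/Parseval integrability point and to the fact that the weight $|r|^{2}[\phi,\phi]$ lives on $\Omega_{\phi}$ closes the only places where such an argument could leak.
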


For $a>0$, we define the dilation $D_{a}$ on $L^{2}(\mathbb{R}^{2})$ as $D_{a}\phi(x,y)=a\phi(ax,ay)$, $(x,y)\in \mathbb{R}^{2},$ $\phi\in L^{2}(\mathbb{R}^{2}).$\\

\begin{lemma}\label{dilatwisker}\cite{arati_orthonormality}
	Let $\p \in L^{2}(\mathbb{R}^{2})$. Then the kernel of $W_{\lambda}(D_{2^{j}}\p)$ and hence that of $W_{\lambda}(D_{2^{j}}(T^{t}_{(2k,l)})^{\lambda2^{-2j}}\p)$, denoted by $K^{\lambda}_{D_{2^{j}}\p}$ and $K^{\lambda}_{D_{2^{j}}(T^{t}_{(2k,l)})^{\lambda2^{-2j}}\p}$ respectively, satisfy the following relations.
	\begin{equation*}
		K^{\lambda}_{D_{2^{j}}\p}(\xi,\eta)=K^{\lambda2^{-2j}}_{\phi}(2^{j}\xi,2^{j}\eta),
	\end{equation*}
	\begin{equation*}
		K^{\lambda}_{D_{2^{j}}(T^{t}_{(2k,l)})^{\lambda2^{-2j}}\p}=e^{2\pi i\lambda2^{-2j}k(2^{j+1}\xi+l)}K^{\lambda2^{-2j}}_{\p}(2^{j}\xi+l,2^{j}\eta).
	\end{equation*}
\end{lemma}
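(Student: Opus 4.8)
The plan is to establish both identities by a direct computation: substitute the definitions of the dilation $D_{2^{j}}$ and the $\lambda$-twisted translation into the integral formula $K^{\lambda}_{g}(\xi,\eta)=\int_{\mathbb{R}}g(x,\eta-\xi)e^{\pi i\lambda x(\eta+\xi)}\,dx$ for the kernel of $W_{\lambda}(g)$, and then perform the appropriate change of variables while carefully tracking the resulting phase factors. The first identity is the engine that drives the second, so I would prove it first and then reduce the twisted-translation case to it.

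For the dilation identity I would write $K^{\lambda}_{D_{2^{j}}\p}(\xi,\eta)=\int_{\mathbb{R}}2^{j}\p\bigl(2^{j}x,2^{j}(\eta-\xi)\bigr)e^{\pi i\lambda x(\eta+\xi)}\,dx$ and substitute $u=2^{j}x$. The single-variable Jacobian $2^{-j}$ cancels the normalization factor $2^{j}$ coming from $D_{2^{j}}$, and the exponent becomes $\pi i\lambda 2^{-j}u(\eta+\xi)$. Rewriting this as $\pi i(\lambda 2^{-2j})u\bigl(2^{j}\eta+2^{j}\xi\bigr)$ and noting that the second argument is $2^{j}(\eta-\xi)=2^{j}\eta-2^{j}\xi$, I recognize the integral as exactly $K^{\lambda 2^{-2j}}_{\p}(2^{j}\xi,2^{j}\eta)$, which is the first claim.

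For the second identity I would first apply the dilation identity with $\psi=(T^{t}_{(2k,l)})^{\lambda 2^{-2j}}\p$, reducing matters to computing $K^{\mu}_{(T^{t}_{(2k,l)})^{\mu}\p}$ at the point $(2^{j}\xi,2^{j}\eta)$, where $\mu=\lambda 2^{-2j}$. Substituting the definition $(T^{t}_{(2k,l)})^{\mu}\p(x,y)=e^{\pi i\mu(xl-2ky)}\p(x-2k,y-l)$ with $y=\eta'-\xi'$ and changing variables $v=x-2k$, the two exponential factors (the twist $e^{\pi i\mu(xl-2k(\eta'-\xi'))}$ and the kernel weight $e^{\pi i\mu x(\eta'+\xi')}$, now with $x=v+2k$) combine so that the $v$-dependent part of the exponent is $\pi i\mu v\bigl((\eta'+l)+\xi'\bigr)$ and the remaining constant-in-$v$ part is a pure phase $\pi i\mu\,2k(l+2\xi')$. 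Matching the $v$-integral against the defining formula for $K^{\mu}_{\p}$ (solving $\eta''-\xi''=\eta'-\xi'-l$ and $\eta''+\xi''=\eta'+\xi'+l$, giving $\eta''=\eta'$ and $\xi''=\xi'+l$) identifies it as $K^{\mu}_{\p}(\xi'+l,\eta')$. Evaluating at $(\xi',\eta')=(2^{j}\xi,2^{j}\eta)$ with $\mu=\lambda 2^{-2j}$ then produces the phase $e^{2\pi i\lambda 2^{-2j}k(2^{j+1}\xi+l)}$ and the shifted kernel $K^{\lambda 2^{-2j}}_{\p}(2^{j}\xi+l,2^{j}\eta)$, which is the second claim.

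The one genuinely delicate step is the phase bookkeeping in this last computation: after the substitution $v=x-2k$, the linear-in-$v$ terms and the constant terms from the two exponentials must be expanded and recombined so that the $v$-part is precisely in kernel form for $\p$, while the leftover phase collapses to the clean expression $2k(l+2\xi')$. Everything else is a routine change of variables, so the only place an error could creep in is a stray sign or factor of $2$ in these quadratic phase terms; I would carry out that expansion with particular care.
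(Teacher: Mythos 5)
Your computation is correct: both identities follow exactly as you describe, and I verified the phase bookkeeping in the second step — after the substitution $v=x-2k$ the exponent collapses to $\pi i\mu v(\eta'+\xi'+l)+2\pi i\mu k(l+2\xi')$, which at $(\xi',\eta')=(2^{j}\xi,2^{j}\eta)$ with $\mu=\lambda 2^{-2j}$ gives precisely the stated phase $e^{2\pi i\lambda 2^{-2j}k(2^{j+1}\xi+l)}$ and the shifted kernel $K^{\lambda 2^{-2j}}_{\p}(2^{j}\xi+l,2^{j}\eta)$. The paper itself gives no proof (the lemma is imported from the cited reference), but your argument is the standard direct verification from the kernel formula and is essentially the only natural route, so there is nothing to flag.
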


\section{The Main result}
Before moving to the main result, let us look at an example of an orthonormal twisted wavelet system.
\begin{example}\label{twavex}
For $j\in \mathbb{Z},$ let \begin{equation*}
	\psi_{j}(x,y)=e^{\pi ixy2^{-2j}}\chi^{H}_{[0,1]}(x)\chi^{H}_{[0,1]}(y),~~x,y\in \mathbb{R},
\end{equation*} where $\chi^{H}_{[0,1]}$ is the Haar wavelet in $L^{2}(\mathbb{R})$ given by $\chi^{H}_{[0,1]}=\begin{cases}
	1, & 0\leq x\leq \frac{1}{2}\\
	-1, & \frac{1}{2}< x\leq 1\\
	0, &\text{otherwise}~~~.
\end{cases}$\\ Then we show that the collection $\{D_{2^{j}}\a \psi_{j}: k,l,j\in \mathbb{Z}\}$ is an orthonormal system in $L^{2}(\mathbb{R}^{2})$. In fact, \begin{align*}
		D_{2^{j}}\a \psi_{j}(x,y)&=2^{j}\a\psi_{j}(2^{j}x,2^{j}y)\\
		&=2^{j}e^{\pi i(2^{j}xl-2^{j}yk)2^{-2j}}\psi_{j}(2^{j}x-k,2^{j}y-l)\\
		&=2^{j}e^{\pi i(2^{j}xl-2^{j}yk)2^{-2j}}e^{\pi i(2^{j}x-k)(2^{j}y-l)2^{-2j}}\chi^{H}_{[0,1]}(2^{j}x-k)\chi^{H}_{[0,1]}(2^{j}y-l)\\
		&=2^{j}e^{\pi i(2^{j}xl-2^{j}yk)2^{-2j}}e^{\pi i(2^{2j}xy-2^{j}xl-2^{j}yk+kl)2^{-2j}}\chi^{H}_{[0,1]}(2^{j}x-k)\chi^{H}_{[0,1]}(2^{j}y-l)\\
		&=2^{j}e^{\pi i(2^{2j}xy-2^{j+1}yk+kl)2^{-2j}}\chi^{H}_{[0,1]}(2^{j}x-k)\chi^{H}_{[0,1]}(2^{j}y-l).
	\end{align*}
Consider, for $j,k,l,j^{'},k^{'},l^{'}\in \mathbb{Z}$,
\begin{align*}
	\langle D_{2^{j}}\a \psi_{j}, D_{2^{j{'}}}(T_{k^{'},l^{'}})^{2^{-2j^{'}}}\psi_{j^{'}}\rangle
	&=\int_{\mathbb{R}}\int_{\mathbb{R}}D_{2^{j}}\a \psi_{j}(x,y)\overline{ D_{2^{j{'}}}(T_{k^{'},l^{'}})^{2^{-2j^{'}}}\psi_{j^{'}}(x,y)}\, dxdy\\
	&=\int_{\mathbb{R}}\int_{\mathbb{R}}2^{j+j^{'}}e^{\pi i(xy-2^{-(j-1)}yk+kl2^{-2j})}\chi^{H}_{[0,1]}(2^{j}x-k)\chi^{H}_{[0,1]}(2^{j}y-l)\\
	&\hspace{2cm}e^{-\pi i(xy-2^{-(j^{'}-1)}yk^{'}+k^{'}l^{'}2^{-2j^{'}})}\chi^{H}_{[0,1]}(2^{j^{'}}x-k^{'})\chi^{H}_{[0,1]}(2^{j^{'}}y-l^{'})\\
	&=e^{\pi i(kl2^{-2j}-k^{'}l^{'}2^{-2j^{'}})}\bigg(\int_{\mathbb{R}}2^{\frac{j+j^{'}}{2}}\chi^{H}_{[0,1]}(2^{j}x-k)\chi^{H}_{[0,1]}(2^{j^{'}}x-k^{'})\,dx\bigg)\\
	&\hspace{1cm}\bigg(\int_{\mathbb{R}}e^{2\pi iy(2^{-j^{'}}k^{'}-2^{-j}k)}2^{\frac{j+j^{'}}{2}}\chi^{H}_{[0,1]}(2^{j}y-l)\chi^{H}_{[0,1]}(2^{j^{'}}y-l^{'})\,dy\bigg).
\end{align*}
Since the system $\{2^{\frac{j}{2}}\chi^{H}_{[0,1]}(2^{j}x-k) : k,j\in \mathbb{Z}\}$ forms an orthonormal basis for $L^{2}(\mathbb{R})$, the first integral turns out to be zero, if $k\neq k^{'}$ or $j\neq j^{'}$. Hence, we get  
\begin{align*}
	\langle D_{2^{j}}\a \psi_{j},& D_{2^{j{'}}}(T_{k^{'},l^{'}})^{2^{-2j^{'}}}\psi_{j^{'}}\rangle\\
	&=\begin{cases}
		0 &~~\text{if}~~k\neq k^{'}~\text{or}~j\neq j^{'}\\
		e^{\pi i2^{-2j}k(l-l^{'})}\int\limits_{\mathbb{R}}2^{j}\chi^{H}_{[0,1]}(2^{j}y-l^{'})\chi^{H}_{[0,1]}(2^{j}y-l^{'})\,dy & ~~\text{otherwise}.
	\end{cases}
\end{align*}
Now for $k=k^{'}$ and $j=j^{'}$, we have 
\begin{equation*}
	\langle D_{2^{j}}\a \psi_{j}, D_{2^{j}}(T_{k,l^{'}})^{2^{-2j}}\psi_{j}\rangle=\begin{cases}
		0 &~~\text{if}~~l\neq l^{'}\\
		1 &~~\text{otherwise}.
	\end{cases}
\end{equation*}
Therefore the system $\{D_{2^{j}}\a \psi_{j}: k,l,j\in \mathbb{Z}\}$ is an orthonormal system in $L^{2}(\mathbb{R}^{2})$.
\end{example}

The natural question is to find an appropriate sufficient condition under which an orthonormal system in $L^{2}(\mathbb{R}^{2})$ consisting of twisted wavelets will turn out to be an orthonormal basis for $L^{2}(\mathbb{R}^{2}).$ As in the classical case, we wish to get an ``analogue'' of discrete Calder\'{o}n condition which will do the needful.\\

Let $\psi\in L^{2}(\mathbb{R}^{2})$. Consider the twisted shift-invariant space $V^{t}(\psi)$. We first intend to define a spectral function $\s$ associated with $V^{t}(\psi)$. Suppose $\{T^{t}_{(k,l)}\psi : k,l\in \mathbb{Z}\}$ forms a Parseval frame for $V^{t}(\psi)$. In this case, we define $$\sigma_{\vv(\psi)}(\e):=\int\limits_{\mathbb{R}}|K_{\psi}(\xi,\e)|^{2}\, d\xi.$$ 
More generally, for $\psi\in L^{2}(\mathbb{R}^{2})$, we define $\s$ associated with $V^{t}(\psi)$ as follows.
\begin{equation*}
	\sigma_{\vv(\psi)}(\xi,\xi^{'},\eta):=\begin{cases}
		\dfrac{\int\limits_{\mathbb{R}}|K_{\psi}(q,\e)|^{2}\, dq}{\left[\psi,\psi\right](\xi,\xi^{'})} & ~\text{if}~~(\xi,\xi^{'})\in \Omega_{\psi}\\
		0 &~~~~~~~\text{otherwise}.
	\end{cases}
\end{equation*}
When $\{T^{t}_{(k,l)}\psi : k,l\in \mathbb{Z}\}$ is a Parseval frame for $V^{t}(\psi)$, $\left[\psi,\psi\right](\xi,\xi^{'})=1$ a.e. $(\xi,\xi^{'})\in \Omega_{\psi}$, where $\Omega_{\psi}=\{(\xi,\xi^{'})\in \mathbb{T}\times \mathbb{T} :\left[\psi,\psi\right](\xi,\xi^{'})\neq 0\}.$ (See Theorem \ref{par}). We shall extend this definition to a general twisted shift-invariant subspace in $L^{2}(\mathbb{R}^{2})$.\\

 Let $V$ be a twisted shift invariant subspace of $L^{2}(\mathbb{R}^{2})$. Then $V$ can be decomposed as $V=\bigoplus\limits_{n\in \N}\vv(\psi_{n})$, where $\psi_{n}$ is a Parseval frame generator for $V^{t}(\psi_{n})$ (see Theorem \ref{decomposition}). Then it follows that $\s_{V}(\e)=\sum\limits_{n\in \N}\s_{\vv(\psi_{n})}(\e)$ defined a.e. $\e\in \mathbb{R}$.
\begin{proposition}\label{sigmafini}
	Let $V$ be a twisted shift invariant space in $L^{2}(\mathbb{R}^{2})$ and $\s_{V}\in L^{1}(\mathbb{R})$. Then $$\bigcap_{j\in \mathbb{Z}}D_{2^{j}}(V)=\{0\},$$ where $D_{2^{j}}(V)=\{D_{2^{j}}f : f\in V\}$.
\end{proposition}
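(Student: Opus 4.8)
The plan is to reduce the claim $\bigcap_{j\in\mathbb{Z}}D_{2^{j}}(V)=\{0\}$ to a statement about the spectral function $\s_{V}$, using the fact that $\s_{V}\in L^{1}(\mathbb{R})$ forces $\s_{V}$ to be finite almost everywhere and, crucially, forces $\s_{V}(\e)\to 0$ in an averaged sense under dilation. First I would write $V=\bigoplus_{n\in\N}\vv(\psi_{n})$ via Theorem \ref{decomposition}, with each $\psi_{n}$ a Parseval frame generator, so that for $f\in V$ we have the decomposition $\|f\|_{2}^{2}=\sum_{n\in\N}\|r_{n}\|^{2}$ and, by Proposition \ref{propo}, $Z_{W}f(\xi,\xi',\e)=\sum_{n}r_{n}(\xi,\xi')Z_{W}\psi_{n}(\xi,\xi',\e)$. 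The key computational input is Lemma \ref{dilatwisker}, which tells us how the Weyl kernel transforms under $D_{2^{j}}$: namely $K_{D_{2^{j}}\psi}(\xi,\e)=K^{2^{-2j}}_{\psi}(2^{j}\xi,2^{j}\e)$. From this one should derive the scaling law for the spectral function, which I expect to take the form $\s_{D_{2^{j}}(V)}(\e)=2^{j}\,\s_{V}(2^{j}\e)$ (the power of $2^{j}$ coming from the change of variables in the kernel integral defining $\s$).

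Next I would establish the monotone-dilation mechanism. The natural approach is to show that membership in $D_{2^{j}}(V)$ places a pointwise constraint tying $\|Z_{W}f(\cdot,\cdot,\e)\|$ to $\s_{D_{2^{j}}(V)}(\e)$; concretely, for $f\in D_{2^{j}}(V)$ one expects a bound of the shape
\begin{equation*}
	\int_{\mathbb{T}}\int_{\mathbb{T}}|Z_{W}f(\xi,\xi',\e)|^{2}\,d\xi\,d\xi'\ \le\ C\,\|f\|_{2}^{2}\,\s_{D_{2^{j}}(V)}(\e)\quad\text{for a.e.\ }\e\in\mathbb{R},
\end{equation*}
obtained by combining the bracket-map estimate, the Parseval-frame normalization $[\psi_{n},\psi_{n}]=1$ on $\Omega_{\psi_{n}}$, and the Cauchy--Schwarz inequality applied to the decomposition of $Z_{W}f$. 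Suppose now $f\in\bigcap_{j\in\mathbb{Z}}D_{2^{j}}(V)$ with $\|f\|_{2}=1$. Applying the displayed inequality with the scaling law $\s_{D_{2^{j}}(V)}(\e)=2^{j}\s_{V}(2^{j}\e)$ and integrating $\e$ against the Weyl--Zak isometry \eqref{zakker}, I would be led to $1=\|f\|_{2}^{2}\le C\int_{\mathbb{R}}2^{j}\s_{V}(2^{j}\e)\,d\e = C\int_{\mathbb{R}}\s_{V}(u)\,du$ for the upper estimate, which by itself is not enough; the decisive step is to run the estimate as $j\to-\infty$ and exploit that $\s_{V}\in L^{1}$ implies the tail mass of $\s_{V}$ concentrates, so that $\int_{|u|\le R}\s_{V}(u)\,du$ over the shrinking relevant region tends to $0$.

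The cleanest route is therefore the following localization: fix $f$ in the intersection and observe that for every $j$ the Weyl--Zak profile of $f$ is supported, up to the $\s$-weight, on the frequency band scaled by $2^{j}$. As $j\to-\infty$ this band is pushed to arbitrarily large $|\e|$ while its contribution is controlled by $\int_{|u|\ge 2^{-j}R}\s_{V}(u)\,du$, an $L^{1}$-tail that vanishes. Passing to the limit gives $\|f\|_{2}=0$, hence $f=0$.

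The main obstacle I anticipate is making the pointwise-in-$\e$ spectral inequality rigorous in the infinite orthogonal decomposition $V=\bigoplus_{n}\vv(\psi_{n})$: one must interchange the sum over $n$ with the $\e$-integration and justify that the frame normalization transfers correctly through $D_{2^{j}}$, since the dilation introduces the $\lambda=2^{-2j}$-dependence in the kernels (Lemma \ref{dilatwisker}) and the $\psi_{n}$ are only Parseval generators on their supports $\Omega_{\psi_{n}}$. A secondary technical point is verifying the scaling identity $\s_{D_{2^{j}}(V)}(\e)=2^{j}\s_{V}(2^{j}\e)$ at the level of the full subspace $V$ rather than a single generator, which requires checking that $D_{2^{j}}$ maps the Parseval decomposition of $V$ to a Parseval decomposition of $D_{2^{j}}(V)$ and that the spectral functions add up as claimed.
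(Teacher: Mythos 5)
Your overall strategy --- decompose $V=\bigoplus_{n}V^{t}(\psi_{n})$ via Theorem \ref{decomposition}, apply Cauchy--Schwarz to the Zak-transform expansion $Z_{W}f=\sum_{n}r_{n}Z_{W}\psi_{n}$, and let the $L^{1}$ tail of $\sigma_{V}$ over dilated dyadic bands kill $\|f\|_{2}$ --- is exactly the paper's, but the two steps you build it on are not correct as stated, and the points you flag as ``anticipated obstacles'' are in fact where the real work lies. First, the pointwise-in-$\eta$ inequality $\int_{\mathbb{T}^{2}}|Z_{W}f(\xi,\xi',\eta)|^{2}\,d\xi\,d\xi'\le C\|f\|_{2}^{2}\,\sigma(\eta)$ does not follow from Cauchy--Schwarz: that step gives $|Z_{W}f(\xi,\xi',\eta)|^{2}\le\bigl(\sum_{n}|r_{n}(\xi,\xi')|^{2}\bigr)\bigl(\sum_{n}|Z_{W}\psi_{n}(\xi,\xi',\eta)|^{2}\bigr)$, and the first factor is only controlled in $L^{1}(\mathbb{T}^{2})$ (its integral is $\|f\|_{2}^{2}$), not in $L^{\infty}$, so it cannot be pulled out of the $(\xi,\xi')$-integral at fixed $\eta$. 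The paper avoids this by never making a pointwise claim: it integrates $|Z_{W}D_{2^{-j}}f|$ (first power, not square) over $\mathbb{T}^{2}\times[1,2]$ and applies Cauchy--Schwarz three times to the full triple integral, which is what lets both factors be estimated by genuine integrals, yielding the bound \eqref{mainineq} by $\bigl(\int\chi_{[2^{j},2^{j+1})}\sigma_{V}\bigr)^{1/2}$.

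Second, the scaling law $\sigma_{D_{2^{j}}(V)}(\eta)=2^{j}\sigma_{V}(2^{j}\eta)$ is not well posed: $\sigma$ is defined only for twisted shift-invariant spaces, and $D_{2^{j}}(V)$ is generally not one, since $T^{t}_{(k',l')}D_{2^{j}}=D_{2^{j}}(T^{t}_{(2^{j}k',2^{j}l')})^{2^{-2j}}$ converts ordinary twisted translations into $\lambda$-twisted translations with $\lambda=2^{-2j}$ (this incompatibility of dilation with twisted translation is precisely the difficulty emphasized in the introduction). The paper sidesteps this entirely: it never forms $\sigma_{D_{2^{j}}(V)}$, but instead keeps the fixed Parseval decomposition of $V$, pulls $f$ back to $D_{2^{-j}}f\in V$, and tracks the dilation only through the kernel identity $K_{D_{2^{-j}}f}(\xi,\eta)=K^{2^{2j}}_{f}(2^{-j}\xi,2^{-j}\eta)$ of Lemma \ref{dilatwisker}. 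Finally, your plan does not address how a single band estimate yields the vanishing of the \emph{entire} $L^{2}$ norm of $f$; the paper needs the additional step of applying the band estimate to every $D_{2^{-i}}f$, $i\in\mathbb{Z}$ (and separately to $[-2,-1]$), summing the resulting identities over $i$ to cover all of $\mathbb{R}$ in $\eta$, and then invoking Plancherel for Fourier series to recognize the limit as $\|f\|_{2}^{2}=1$, giving the contradiction. Until these three points are repaired, the proposal does not constitute a proof.
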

\begin{proof}
	Suppose $0\neq f\in \bigcap_{j\in \mathbb{Z}}D_{2^{j}}(V)$. Without loss of generality assume that $\|f\|_{L^{2}(\mathbb{R}^{2})}=1$. Since $f\in \bigcap_{j\geq 0}D_{2^{j}}(V)$, we have $\dj f\in V,$ for all $j\geq 0$. By Theorem \ref{decomposition}, there exists a sequence of functions $\{f_{n}^{j}\}_{n\in \mathbb{N}}$ in $V^{t}(\psi_{n})$ such that $\dj f= \sum\limits f_{n}^{j}.$ By applying Weyl-Zak transform in the above equation, we obtain 
	\begin{equation}\label{zak1}
		Z_{W}\dj f(\xi,\xi^{'},\eta)=\sum\limits_{n\in \N}Z_{W} f_{n}^{j}(\xi,\xi^{'},\e)
	\end{equation} By using Proposition \ref{propo}, there exists $r_{n}^{j}\in L^{2}(\mathbb{T}^{2})$ such that 
\begin{equation}\label{zak2}
	Z_{W} f_{n}^{j}(\xi,\xi^{'},\e)=r_{n}^{j}(\xi,\xi^{'})Z_{W} \psi_{n}(\xi,\xi^{'},\e),
\end{equation} for a.e. $\xi,\xi^{'}\in \mathbb{T}$, $\e\in \mathbb{R}.$ By substituting \eqref{zak2} in \eqref{zak1}, we get \begin{equation}\label{zaksum}
Z_{W}\dj f(\xi,\xi^{'},\eta)=\sum\limits_{n\in \N}r_{n}^{j}(\xi,\xi^{'})Z_{W} \psi_{n}(\xi,\xi^{'},\e).
\end{equation} Moreover, \begin{equation}\label{rnorm}
\sum\limits_{n\in \N}\|r_{n}^{j}\|^{2}_{L^{2}(\mathbb{T}^{2})}=\|\dj f\|^{2}_{L^{2}(\mathbb{R}^{2})}=\| f\|^{2}_{L^{2}(\mathbb{R}^{2})}=1.
\end{equation} (See Proposition \ref{decomposition}). The kernel of the Weyl transform of $\dj f$ is given by $$K_{\dj f}(\xi,\eta)=K_{f}^{2^{2j}}(2^{-j}\xi,2^{-j}\eta).$$ (See Lemma \ref{dilatwisker}). Therefore, \eqref{zaksum} becomes $$\sum\limits_{m\in \mathbb{Z}}K^{2^{2j}}_{f}(2^{-j}(\xi+m),2^{-j}\eta)e^{-2\pi im\xi^{'}}=\sum\limits_{n\in \N}r_{n}^{j}(\xi,\xi^{'})Z_{W} \psi_{n}(\xi,\xi^{'},\e),$$ for a.e. $\xi,\xi^{'}\in \mathbb{T}$, $\e\in \mathbb{R}$, which implies that $$\sum\limits_{m\in \mathbb{Z}}K^{2^{2j}}_{f}(2^{-j}(\xi+m),\eta)e^{-2\pi im\xi^{'}}=\sum\limits_{n\in \N}r_{n}^{j}(\xi,\xi^{'})Z_{W} \psi_{n}(\xi,\xi^{'},2^{j}\e),$$ for a.e. $\xi,\xi^{'}\in \mathbb{T}$, $\e\in \mathbb{R}$. Now, consider 
\begin{align*}
\int_{\mathbb{T}^{2}}\int_{1}^{2}\bigg|\sum\limits_{m\in \mathbb{Z}}K^{2^{2j}}_{f}&(2^{-j}(\xi+m),\eta)e^{-2\pi im\xi^{'}}\bigg|\,d\eta d\xi d\xi^{'}\\
&=\int_{\mathbb{T}^{2}}\int_{1}^{2}\bigg|\sum\limits_{n\in \N}r_{n}^{j}(\xi,\xi^{'})Z_{W} \psi_{n}(\xi,\xi^{'},2^{j}\e)\bigg|\,d\eta d\xi d\xi^{'}\\
&\leq\int_{\mathbb{T}^{2}}\int_{1}^{2}\bigg(\sum\limits_{n\in \N}\big|r_{n}^{j}(\xi,\xi^{'})\big|^{2}\bigg)^{\frac{1}{2}}\bigg(\sum\limits_{n\in \N}|Z_{W} \psi_{n}(\xi,\xi^{'},2^{j}\e)\big|^{2}\bigg)^{\frac{1}{2}}\,d\eta d\xi d\xi^{'}\\	
&=\int_{\mathbb{T}^{2}}\bigg(\sum\limits_{n\in \N}\big|r_{n}^{j}(\xi,\xi^{'})\big|^{2}\bigg)^{\frac{1}{2}}\int_{1}^{2}1 \cdot\bigg(\sum\limits_{n\in \N}|Z_{W} \psi_{n}(\xi,\xi^{'},2^{j}\e)\big|^{2}\bigg)^{\frac{1}{2}}\,d\eta d\xi d\xi^{'}\\
&\leq\int_{\mathbb{T}^{2}}\bigg(\sum\limits_{n\in \N}\big|r_{n}^{j}(\xi,\xi^{'})\big|^{2}\bigg)^{\frac{1}{2}}\bigg(\int_{1}^{2}\sum\limits_{n\in \N}|Z_{W} \psi_{n}(\xi,\xi^{'},2^{j}\e)\big|^{2}\,d\eta\bigg)^{\frac{1}{2}}\,d\xi d\xi^{'}\\
&\leq\bigg(\int_{\mathbb{T}^{2}}\sum\limits_{n\in \N}\big|r_{n}^{j}(\xi,\xi^{'})\big|^{2}\,d\xi d\xi^{'}\bigg)^{\frac{1}{2}}\bigg(\int_{\mathbb{T}^{2}}\int_{1}^{2}\sum\limits_{n\in \N}|Z_{W} \psi_{n}(\xi,\xi^{'},2^{j}\e)\big|^{2}\,d\eta\,d\xi d\xi^{'}\bigg)^{\frac{1}{2}},
\end{align*}
by applying Cauchy-Schwartz inequality thrice. Thus
\begin{align*}
	\int_{\mathbb{T}^{2}}\int_{1}^{2}\bigg|\sum\limits_{m\in \mathbb{Z}}K^{2^{2j}}_{f}(2^{-j}(\xi+m),\eta)e^{-2\pi im\xi^{'}}\bigg|&\,d\eta d\xi d\xi^{'}\\
&\leq 2^{\frac{-j}{2}}\bigg(\int_{\mathbb{T}^{2}}\int_{2^{j}}^{2^{j+1}}\sum\limits_{n\in \N}|Z_{W} \psi_{n}(\xi,\xi^{'},\e)\big|^{2}\,d\eta\,d\xi d\xi^{'}\bigg)^{\frac{1}{2}}\\
&= 2^{\frac{-j}{2}}\bigg(\int_{\mathbb{R}}\chi_{[2^{j},2^{j+1})}(\e)\sum\limits_{n\in \N}\int_{\mathbb{T}^{2}}|Z_{W} \psi_{n}(\xi,\xi^{'},\e)\big|^{2}\,d\xi d\xi^{'}\,d\eta \bigg)^{\frac{1}{2}}\\
&=2^{\frac{-j}{2}}\bigg(\int_{\mathbb{R}}\chi_{[2^{j},2^{j+1})}(\e)\sum\limits_{n\in \N}\int_{\mathbb{R}}|K_{\psi_{n}}(\xi,\eta)|^{2}\, d\xi d\eta\bigg)^{\frac{1}{2}}\\
&=2^{\frac{-j}{2}} \bigg(\int_{\mathbb{R}}\chi_{[2^{j},2^{j+1})}(\e)\sigma_{V}(\eta)\,d\eta\bigg)^{\frac{1}{2}},
\end{align*}
using \eqref{zakker}.
Therefore, we have \begin{equation}\label{mainineq}
	\int_{\mathbb{T}^{2}}\int_{1}^{2}2^{\frac{j}{2}}\bigg|\sum\limits_{m\in \mathbb{Z}}K^{2^{2j}}_{f}(2^{-j}(\xi+m),\eta)e^{-2\pi im\xi^{'}}\bigg|\,d\eta d\xi d\xi^{'}\leq  \bigg(\int_{\mathbb{R}}\chi_{[2^{j},2^{j+1})}(\e)\sigma_{V}(\eta)\,d\eta\bigg)^{\frac{1}{2}}.
\end{equation}
Since $\sigma_{V}\in L^{1}(\mathbb{R})$, we can apply Lebesgue dominated convergence theorem for the sequence of functions $\{\chi_{[2^{j},2^{j+1})}\sigma_{V}\}_{j\in \mathbb{Z}}$, from which it follows that $$\int_{\mathbb{T}^{2}}\int_{1}^{2}\lim\limits_{j\to \infty}2^{\frac{j}{2}}\bigg|\sum\limits_{m\in \mathbb{Z}}K^{2^{2j}}_{f}(2^{-j}(\xi+m),\eta)e^{-2\pi im\xi^{'}}\bigg|\,d\eta d\xi d\xi^{'}=0.$$ This leads to the fact that 
\begin{equation*}
\int_{\mathbb{T}^{2}}\int_{1}^{2}\lim\limits_{j\to \infty}2^{j}\bigg|\sum\limits_{m\in \mathbb{Z}}K^{2^{2j}}_{f}(2^{-j}(\xi+m),\eta)e^{-2\pi im\xi^{'}}\bigg|^{2}\,d\eta d\xi d\xi^{'}=0. 
\end{equation*}
For each $i\in \mathbb{Z}$, we have $D_{2^{-i}}f\in \bigcap\limits_{j\geq 0}D_{2^{j}}(V)$ and $\|D_{2^{-i}}f\|_{L^{2}(\mathbb{R}^{2})}=1.$ Thus, 
\begin{align}\label{i}
	0&=\int_{\mathbb{T}^{2}}\int_{1}^{2}\lim\limits_{j\to \infty}2^{j}\bigg|\sum\limits_{m\in \mathbb{Z}}K^{2^{2j}}_{D^{2^{-i}}f}(2^{-j}(\xi+m),\eta)e^{-2\pi im\xi^{'}}\bigg|^{2}\,d\eta d\xi d\xi^{'}\nonumber\\
	&=\int_{\mathbb{T}^{2}}\int_{1}^{2}\lim\limits_{j\to \infty}2^{j}\bigg|\sum\limits_{m\in \mathbb{Z}}K^{2^{2(j+i)}}_{f}(2^{-(j+i)}(\xi+m),2^{-i}\eta)e^{-2\pi im\xi^{'}}\bigg|^{2}\,d\eta d\xi d\xi^{'}\nonumber\\
	&=\int_{\mathbb{T}^{2}}\int_{2^{i}}^{2^{i+1}}\lim\limits_{j\to \infty}2^{j+i}\bigg|\sum\limits_{m\in \mathbb{Z}}K^{2^{2(j+i)}}_{f}(2^{-(j+i)}(\xi+m),\eta)e^{-2\pi im\xi^{'}}\bigg|^{2}\,d\eta d\xi d\xi^{'}\nonumber\\
	&=\int_{\mathbb{T}^{2}}\int_{2^{i}}^{2^{i+1}}\lim\limits_{j\to \infty}2^{j}\bigg|\sum\limits_{m\in \mathbb{Z}}K^{2^{2j}}_{f}(2^{-j}(\xi+m),\eta)e^{-2\pi im\xi^{'}}\bigg|^{2}\,d\eta d\xi d\xi^{'},~~\forall~~ i\in \mathbb{Z}.
\end{align}
We observe that \begin{align}\label{combine}
	\int_{\mathbb{T}^{2}}\int_{0}^{\infty}\lim\limits_{j\to \infty}2^{j}\bigg|\sum\limits_{m\in \mathbb{Z}}K^{2^{2j}}_{f}&(2^{-j}(\xi+m),\eta)e^{-2\pi im\xi^{'}}\bigg|^{2}\,d\eta d\xi d\xi^{'}\nonumber\\
	&=\sum\limits_{i\in\mathbb{Z}}\int_{\mathbb{T}^{2}}\int_{2^{i}}^{2^{i+1}}\lim\limits_{j\to \infty}2^{j}\bigg|\sum\limits_{m\in \mathbb{Z}}K^{2^{2j}}_{f}(2^{-j}(\xi+m),\eta)e^{-2\pi im\xi^{'}}\bigg|^{2}\,d\eta d\xi d\xi^{'}.
\end{align} By combining \eqref{i} and \eqref{combine}, we get
$$\int_{\mathbb{T}^{2}}\int_{0}^{\infty}\lim\limits_{j\to \infty}2^{j}\bigg|\sum\limits_{m\in \mathbb{Z}}K^{2^{2j}}_{f}(2^{-j}(\xi+m),\eta)e^{-2\pi im\xi^{'}}\bigg|^{2}\,d\eta d\xi d\xi^{'}=0,~\forall~~~f\in \bigcap\limits_{j\in \mathbb{Z}}D_{2^{j}}(V).$$ By proceeding with the above argument with respect to the interval $[-2,-1]$ instead of $[1,2]$, we can show that $$\int_{\mathbb{T}^{2}}\int_{-\infty}^{0}\lim\limits_{j\to \infty}2^{j}\bigg|\sum\limits_{m\in \mathbb{Z}}K^{2^{2j}}_{f}(2^{-j}(\xi+m),\eta)e^{-2\pi im\xi^{'}}\bigg|^{2}\,d\eta d\xi d\xi^{'}=0,~\forall~~~f\in \bigcap\limits_{j\in \mathbb{Z}}D_{2^{j}}(V).$$ Thus,
$$\int_{\mathbb{T}^{2}}\int_{\mathbb{R}}\lim\limits_{j\to \infty}2^{j}\bigg|\sum\limits_{m\in \mathbb{Z}}K^{2^{2j}}_{f}(2^{-j}(\xi+m),\eta)e^{-2\pi im\xi^{'}}\bigg|^{2}\,d\eta d\xi d\xi^{'}=0$$
Now, by using the Plancherel theorem for the Fourier series, we have
     \begin{align*}
     	0&=\int_{\mathbb{T}^{2}}\int_{\mathbb{R}}\lim\limits_{j\to \infty}2^{j}\bigg|\sum\limits_{m\in \mathbb{Z}}K^{2^{2j}}_{f}(2^{-j}(\xi+m),\eta)e^{-2\pi im\xi^{'}}\bigg|^{2}\,d\eta d\xi d\xi^{'}\\
     	&=\int_{\mathbb{R}}\int_{\mathbb{T}}\lim\limits_{j\to \infty}2^{j}\sum\limits_{m\in \mathbb{Z}}\big|K^{2^{2j}}_{f}(2^{-j}(\xi+m),\eta)\big|^{2}\,d\eta d\xi\\
     	&=\int_{\mathbb{R}}\int_{\mathbb{R}}\lim\limits_{j\to \infty}2^{j}\big|K^{2^{2j}}_{f}(2^{-j}\xi,\eta)\big|^{2}\,d\eta d\xi\\
     	&=\int_{\mathbb{R}}\int_{\mathbb{R}}\lim\limits_{j\to \infty}2^{2j}\big|K^{2^{2j}}_{f}(\xi,\eta)\big|^{2}\,d\eta d\xi\\
     	&=\lim\limits_{j\to \infty}2^{2j}\int_{\mathbb{R}}\int_{\mathbb{R}}\big|K^{2^{2j}}_{f}(\xi,\eta)\big|^{2}\,d\eta d\xi\\
     	&=\lim\limits_{j\to \infty}\|f\|^{2}_{L^{2}(\mathbb{R}^{2})}=\|f\|^{2}_{L^{2}(\mathbb{R}^{2})}=1,
     \end{align*}
which is a contradiction. This completes the proof. 
\end{proof}
Now we shall show that if the modified twisted wavelet system is orthonormal and satisfies the discrete Calder\'{o}n condition, then it will turn out to be an orthonormal basis for $L^{2}(\mathbb{R}^{2})$. 
\begin{theorem}
	Let $\psi_{j}\in L^{2}(\mathbb{R}^{2})$ with $\|\psi_{j}\|_{L^{2}(\mathbb{R}^{2})}=1$, for all $j\in \mathbb{Z}$. Suppose the collection $\{\D\a \psi_{j} : k,l,m\in \mathbb{Z},~j\geq 0\}$ is an orthonormal system in $L^{2}(\mathbb{R}^{2})$ which satisfies $\sum\limits_{j\in\mathbb{Z}}2^{j}\|K^{2^{-2j}}_{\psi_{j}}(\cdot,2^{j}\eta)\|^{2}=1$, for a.e. $\eta\in \mathbb{R}$. Then the collection $\{\D\a\psi_{j} : k,l,m\in \mathbb{Z},j\geq 0\}$ forms an orthonormal basis for $L^{2}(\mathbb{R}^{2})$.  
\end{theorem}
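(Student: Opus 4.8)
The plan is to show that the closed span $\mathcal{W}$ of the given collection equals $L^{2}(\mathbb{R}^{2})$; since the system is already assumed orthonormal, this is exactly the statement that it is an orthonormal basis. Thus it suffices to prove the completeness assertion $V:=\mathcal{W}^{\perp}=\{0\}$. I would organize $\mathcal{W}$ by the dilation index $j$, setting $W_{j}=\overline{\operatorname{span}}\{\D\a\psi_{j}:k,l,m\in\mathbb{Z}\}$ for each $j\geq 0$, so that orthonormality yields the orthogonal decomposition $\mathcal{W}=\bigoplus_{j\geq 0}W_{j}$ and a corresponding decomposition of $V$.

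The first key point—and the whole reason the modified system is used—is that letting $m$ range over all of $\mathbb{Z}$ makes each $W_{j}$ invariant under $D_{2}$: applying $D_{2}$ to a generator $\D\a\psi_{j}$ merely sends $m\mapsto m+1$, leaving the twist parameter $2^{-2j}$ and the generator $\psi_{j}$ untouched, so $D_{2}W_{j}=W_{j}$. Hence $\mathcal{W}$, and therefore $V$, are $D_{2}$-invariant, giving $D_{2^{i}}(V)=V$ for every $i\in\mathbb{Z}$ and in particular $\bigcap_{i\in\mathbb{Z}}D_{2^{i}}(V)=V$. This is precisely the configuration in which Proposition \ref{sigmafini} becomes decisive: once I verify that $V$ is a twisted shift-invariant space and that $\sigma_{V}\in L^{1}(\mathbb{R})$, Proposition \ref{sigmafini} forces $V=\bigcap_{i}D_{2^{i}}(V)=\{0\}$, completing the proof.

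It remains to check the two hypotheses of Proposition \ref{sigmafini} for $V$. For the twisted shift-invariance I would combine the composition law \eqref{comptsttrnsl} with the dilation--twist intertwining to show that twisted translation preserves $\mathcal{W}$, hence $V$; this must be handled with care, since $D_{2^{m+j}}$ conjugates an ordinary twisted translation into a $2^{-2(m+j)}$-twisted one, so the argument has to exploit the presence of all $m$ at once. For the integrability I would compute the spectral density of $\mathcal{W}$ via Lemma \ref{dilatwisker}, $K^{\lambda}_{D_{2^{j}}\psi}(\xi,\eta)=K^{\lambda 2^{-2j}}_{\psi}(2^{j}\xi,2^{j}\eta)$, which identifies the scale-$j$ density with a multiple of $2^{j}\|K^{2^{-2j}}_{\psi_{j}}(\cdot,2^{j}\eta)\|^{2}$. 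The Calder\'{o}n hypothesis $\sum_{j\in\mathbb{Z}}2^{j}\|K^{2^{-2j}}_{\psi_{j}}(\cdot,2^{j}\eta)\|^{2}=1$ then shows that $\mathcal{W}$ (indexed by $j\geq 0$) carries all but the negative-scale part of the total density, so that $\sigma_{V}(\eta)$ is governed by $\sum_{j<0}2^{j}\|K^{2^{-2j}}_{\psi_{j}}(\cdot,2^{j}\eta)\|^{2}$. Using $\|K^{2^{-2j}}_{\psi_{j}}\|_{L^{2}(\mathbb{R}^{2})}^{2}=2^{2j}\|\psi_{j}\|_{2}^{2}=2^{2j}$, integration in $\eta$ gives $\int_{\mathbb{R}}\sigma_{V}\lesssim\sum_{j<0}2^{2j}<\infty$, so $\sigma_{V}\in L^{1}(\mathbb{R})$ as required.

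I expect the main obstacle to be exactly this interface with Proposition \ref{sigmafini}: proving rigorously that the orthogonal complement $V$ is a genuine twisted shift-invariant space (so that Theorem \ref{decomposition}, and hence Proposition \ref{sigmafini}, apply) and that its spectral function is the negative-scale remainder of the Calder\'{o}n sum. The delicate feature throughout is that the dilations $D_{2^{m+j}}$ neither commute with the twisted translations nor preserve the twist parameter, sending $2^{-2j}$ to $2^{-2(m+j)}$; keeping this bookkeeping consistent while summing the spectral contributions over $m$ and $j$, and confirming that the $L^{1}$-finiteness is controlled by the convergent geometric series $\sum_{j<0}2^{2j}$, is where the real work lies.
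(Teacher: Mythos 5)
Your overall architecture---reduce completeness to Proposition \ref{sigmafini} by showing that the relevant orthogonal complement has integrable spectral function, with the Calder\'{o}n condition supplying the cancellation that leaves only a convergent geometric tail---is the same as the paper's, and your observations that $D_{2}$ merely permutes the index $m$ and that the integral of $\sigma_{V}$ reduces to $\sum_{j<0}2^{2j}<\infty$ are correct. But there is a genuine gap at the decisive step: you apply Proposition \ref{sigmafini} directly to $V=\mathcal{W}^{\perp}$, which requires $\mathcal{W}$ (equivalently $V$) to be twisted shift-invariant, and this is exactly what cannot be established by ``exploiting the presence of all $m$ at once.'' Indeed, $T^{t}_{(k',l')}D_{2^{m+j}}(T^{t}_{(k,l)})^{2^{-2j}} = D_{2^{m+j}}(T^{t}_{(2^{m+j}k',2^{m+j}l')})^{2^{-2(m+j)}}(T^{t}_{(k,l)})^{2^{-2j}}$, and for $m\neq 0$ the two twisted translations carry \emph{different} parameters $2^{-2(m+j)}$ and $2^{-2j}$; by \eqref{comptsttrnsl} their product collapses to a single twisted translation only when the parameters agree, and otherwise it is a modulation composed with an ordinary translation, which is not a $2^{-2j}$-twisted translate of the generator (for $m+j<0$ the shift $2^{m+j}k'$ is not even an integer). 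Varying $m$ does not repair this, because the twist parameter of the generators is tied to $j$ alone while the conjugated parameter is tied to $m+j$. This is precisely the obstruction described in the paper's introduction, and it is why only the $m=0$ layers $W_{0,j}=\overline{\text{span}\{D_{2^{j}}(T^{t}_{(k,l)})^{2^{-2j}}\psi_{j}:k,l\in\mathbb{Z}\}}$ are twisted shift-invariant.

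The paper routes around this: it applies Proposition \ref{sigmafini} not to $Z=\mathcal{W}^{\perp}$ but to $V=(\bigoplus_{j\geq 0}W_{0,j})^{\perp}$, which \emph{is} twisted shift-invariant, and then uses $Z\subset(\bigoplus_{j\geq 0}W_{m,j})^{\perp}=D_{2^{m}}(V)$ for every $m$ to conclude $Z\subset\bigcap_{m}D_{2^{m}}(V)=\{0\}$. Note that with this choice the identity $\sigma_{V}=1-\sum_{j\geq 0}\sigma_{W_{0,j}}$ involves only the $m=0$ contributions, which is what makes the Calder\'{o}n hypothesis yield exactly the tail $\sum_{j<0}2^{j}\|K^{2^{-2j}}_{\psi_{j}}(\cdot,2^{j}\eta)\|^{2}$; your version would instead require $\sigma_{V}=1-\sum_{m\in\mathbb{Z}}\sum_{j\geq 0}\sigma_{W_{m,j}}$, a different bookkeeping that you have not carried out. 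A further omission, needed even on the corrected route: computing $\sigma_{W_{0,j}}$ presupposes a Parseval frame of genuine twisted translates for $W_{0,j}$, and the paper supplies this by proving that $\{T^{t}_{(k',l')}D_{2^{j}}(T^{t}_{(k,l)})^{2^{-2j}}\psi_{j}:k',l'\in\mathbb{Z},\ k,l\in\{0,\dots,2^{j}-1\}\}$ is a Parseval frame for $W_{0,j}$---a step absent from your outline.
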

\begin{proof}
	Let us define $W_{m,j}:=\overline{\text{span}\{\D\a\psi_{j} : k,l\in \mathbb{Z}\}}$, for $m\in \mathbb{Z}$. 
	Now, for $k,l\in\mathbb{Z}$
	\begin{align}\label{Wjtwist}
		T^{t}_{(k^{'},l^{'})}D_{2^{j}}\a\psi_{j}(x,y)&= e^{\pi i(xl^{'}-yk^{'})}D_{2^{j}}\a\psi_{j}(x-k^{'},y-l^{'})\nonumber\\
		&=2^{j}e^{\pi i(xl^{'}-yk^{'})}\a\psi_{j}(2^{j}(x-k^{'}),2^{j}(y-l^{'}))\nonumber\\
		&=2^{j}e^{\pi i(2^{j}x2^{j}l^{'}-2^{j}y2^{j}k^{'})2^{-2j}}\a\psi_{j}(2^{j}(x-k^{'}),2^{j}(y-l^{'}))\nonumber\\
		&=2^{j}(T^{t}_{(2^{j}k^{'},2^{j}l^{'})})^{2^{-2j}}\a\psi_{j}(2^{j}x,2^{j}y)\nonumber\\
		&=D_{2^{j}}(T^{t}_{(2^{j}k^{'},2^{j}l^{'})})^{2^{-2j}}\a\psi_{j}(x,y)\nonumber\\
		&=e^{-\pi i2^{-2j}(2^{j}k^{'}l-2^{j}l^{'}k)}D_{2^{j}}(T_{(2^{j}k^{'}+k,2^{j}l^{'}+l}^t)^{2^{-2j}}\psi_{j}(x,y),
	\end{align}
by using \eqref{comptsttrnsl}. By making use of \eqref{Wjtwist}, we can show that $W_{0,j}$ is twisted shift invariant space. Moreover, notice that the subspaces $W_{m,j}$, $m\in \mathbb{Z},j\geq 0$ are mutually orthogonal. Let $Z$ denote the orthogonal complement of $\bigoplus\limits_{m,j\in\mathbb{Z}}W_{m,j}$ in $L^{2}(\mathbb{R}^{2})$. In order to prove the completeness of the system $\{\D\a\psi_{j} : k,l,m\in \mathbb{Z},j\geq 0\}$, it is enough to show that $Z=\{0\}$. Let $V:=(\bigoplus\limits_{j\geq 0}W_{0,j})^{\perp}$. Since $W_{0,j}$ is a twisted invariant subspace, $j\geq 0$, $V$ is also twisted shift invariant space. Further, $D_{2^{m}}(V)=(\bigoplus\limits_{j\geq 0}W_{m,j})^{\perp},~\forall~~ m\in \mathbb{Z}$. Thus, $Z\subset (\bigoplus\limits_{j\geq 0}W_{m,j})^{\perp}=D_{2^{m}}(V),~\forall~~ m\in \mathbb{Z}$, which implies that $$Z\subset \bigcap\limits_{m\in \mathbb{Z}}D_{2^{m}}(V).$$ Since $L^{2}(\mathbb{R}^{2})=V\bigoplus(\bigoplus\limits_{j\geq 0}W_{0,j})$ it follows that
\begin{equation}\label{sigmasum}
	\sigma_{L^{2}(\mathbb{R}^{2})}=\sigma_{V}+\sum\limits_{j\in \mathbb{Z}}\sigma_{W_{0,j}}.
\end{equation}
Now, our aim is to show that the system $\{T^{t}_{(k^{'},l^{l})}D_{2^{j}}\a\psi_{j} : k^{'},l^{'}\in \mathbb{Z},k,l\in \{0,...,2^{j-1}\}\}$ is a Parseval frame for $W_{0,j}$, $j\geq 0$. For any $k,l\in \mathbb{Z}$ there exist unique $p,q\in\mathbb{Z}$ and $r,s\in \{0,...,2^{j-1}\}$ such that $k=p2^{j}+r$ and $l=q2^{j}+s$.
By using \eqref{Wjtwist}, we have
 \begin{align}\label{Wjpar}
	T^{t}_{(k^{'},l^{'})}D_{2^{j}}\a\psi_{j}&=	T^{t}_{(k^{'},l^{'})}D_{2^{j}} (T^{t}_{(p2^{j}+r,q2^{j}+s)})^{2^{-2j}}\psi_{j}\nonumber\\
	&=e^{-\pi i2^{-j}(ps-qr)}T^{t}_{(k^{'},l^{'})}D_{2^{j}} (T^{t}_{(p2^{j},q2^{j})})^{2^{-2j}}(T^{t}_{(r,s)})^{2^{-2j}}\psi_{j}\nonumber\\
	&=e^{-\pi i2^{-j}(ps-qr)}T^{t}_{(k^{'},l^{'})}T^{t}_{(p,q)}D_{2^{j}}(T^{t}_{(r,s)})^{2^{-2j}}\psi_{j}\nonumber\\
	&=e^{-\pi i2^{-j}(ps-qr)}e^{-\pi i(k^{'}q-l^{'}p)}T^{t}_{(k^{'}+p,l^{'}+q)}D_{2^{j}} (T^{t}_{(r,s)})^{2^{-2j}}\psi_{j}
\end{align}
By making use of \eqref{Wjpar} and the fact that $W_{0,j}$ is a twisted shift invariant space, we can show that \begin{equation}\label{Wjspan}
	W_{0,j}=\overline{\text{span}\{T^{t}_{(k^{'},l^{l})}D_{2^{j}}\a\psi_{j} : k^{'},l^{'}\in \mathbb{Z},k,l\in \{0,...,2^{j-1}\}\}}.
\end{equation} By our assumption, $\{D_{2^{j}}\a \psi_{j} : k,l\in \mathbb{Z}\}$ is an orthonormal basis for $W_{0,j}$. For $f\in W_{0,j}$, using \eqref{comptsttrnsl} and \eqref{Wjtwist}, we get,
\begin{align}\label{Wjine}
	\|f\|^{2}_{L^{2}(\mathbb{R}^{2})}&=\sum\limits_{k,l\in\mathbb{Z}}\big|\langle f, D_{2^{j}}\a\psi_{j}\rangle_{L^{2}(\mathbb{R}^{2})} \big|^{2}\nonumber\\
	&=\sum\limits_{k^{'},l^{'}\in \mathbb{Z}}\sum\limits_{k,l=0}^{2^{j}-1}\big|\langle f, D_{2^{j}}(T^{t}_{(k^{'}2^{j}+k,l^{'}2^{j}+l)})^{2^{-2j}}\psi_{j}\rangle_{L^{2}(\mathbb{R}^{2})} \big|^{2}\nonumber\\
	&=\sum\limits_{k^{'},l^{'}\in \mathbb{Z}}\sum\limits_{k,l=0}^{2^{j}-1}\big|\langle f, e^{-\pi i2^{-j}(k^{'}k-l^{'}l)}D_{2^{j}}(T^{t}_{(k^{'}2^{j},l^{'}2^{j})})^{2^{-2j}}(T^{t}_{(k,l)})^{2^{-2j}}\psi_{j}\rangle_{L^{2}(\mathbb{R}^{2})} \big|^{2}\nonumber\\
	&=\sum\limits_{k^{'},l^{'}\in \mathbb{Z}}\sum\limits_{k,l=0}^{2^{j}-1}\big|\langle f, T^{t}_{(k^{'},l^{'})}D_{2^{j}}(T^{t}_{(k,l)})^{2^{-2j}}\psi_{j}\rangle_{L^{2}(\mathbb{R}^{2})} \big|^{2}.
\end{align}
From \eqref{Wjine} and \eqref{Wjspan}, it follows that the system $\{T^{t}_{(k^{'},l^{'})}D_{2^{j}}\a\psi_{j} : k^{'},l^{'}\in \mathbb{Z},k,l\in \{0,...,2^{j-1}\}\}$ is a Parseval frame for $W_{0,j}$, $j\geq 0$. Now, let us compute $\sigma_{W_{0,j}}$. \begin{align*}
	\sigma_{W_{0,j}}(\eta)&=\sum\limits_{k,l=0}^{2^{j-1}}\sigma_{D_{2^{j}} \a\psi_{j}}(\eta)\\
	&=\sum\limits_{k,l=0}^{2^{j-1}}\int_{\mathbb{R}}|K_{D_{2^{j}}\a\psi_{j}}(\xi,\eta)|^{2}\,d\xi\\
	&=\sum\limits_{k,l=0}^{2^{j-1}}\int_{\mathbb{R}}|K^{2^{-2j}}_{\psi_{j}}(2^{j}\xi+l,2^{j}\eta)|^{2}\,d\xi\\	&=\sum\limits_{k,l=0}^{2^{j-1}}2^{-j}\int_{\mathbb{R}}|K^{2^{-2j}}_{\psi_{j}}(\xi,2^{j}\eta)|^{2}\,d\xi\\
	&=2^{j}\int_{\mathbb{R}}|K^{2^{-2j}}_{\psi_{j}}(\xi,2^{j}\eta)|^{2}\,d\xi.\\
\end{align*}
Using \eqref{sigmasum} and the fact that $\sigma(L^{2}(\mathbb{R}^{2}))=1$, we have $$\sigma_{V}(\eta)=1-\sum\limits_{j\geq 0}2^{j}\int_{\mathbb{R}}|K^{2^{-2j}}_{\psi_{j}}(\xi,2^{j}\eta)|^{2}\,d\xi.$$ By the given hypothesis, $\sum\limits_{j\in\mathbb{Z}}2^{j}\|K^{2^{-2j}}_{\psi_{j}}(\cdot,2^{j}\eta)\|^{2}=1$, for a.e. $\eta\in \mathbb{R}$, from which it follows that \begin{align*}
	\int_{\mathbb{R}}\sigma_{V}(\eta)\,d\eta&=\int_{\mathbb{R}}\bigg(\sum\limits_{j\in \mathbb{Z}}2^{j}\int_{\mathbb{R}}|K^{2^{-2j}}_{\psi_{j}}(\xi,2^{j}\eta)|^{2}\,d\xi-\sum\limits_{j\geq 0}2^{j}\int_{\mathbb{R}}|K^{2^{-2j}}_{\psi_{j}}(\xi,2^{j}\eta)|^{2}\,d\xi\bigg)d\eta\\
	&=\int_{\mathbb{R}}\sum\limits_{j\geq 0}2^{-j}\int_{\mathbb{R}}|K^{2^{2j}}_{\psi_{-j}}(\xi,2^{-j}\eta)|^{2}\,d\xi d\eta\\
	&=\sum\limits_{j\geq 0}\int_{\mathbb{R}}\int_{\mathbb{R}}|K^{2^{2j}}_{\psi_{-j}}(\xi,\eta)|^{2}\,d\xi d\eta\\
	&=\sum\limits_{j\geq 0}\|K^{2^{2j}}_{\psi_{-j}}\|^{2}_{L^{2}(\mathbb{R}^{2})}\\
	&=\sum\limits_{j\geq 0}\frac{1}{4^{j}}\|\psi_{-j}\|^{2}_{L^{2}(\mathbb{R}^{2})}=\sum\limits_{j\geq 0}\frac{1}{4^{j}}<\infty.
\end{align*}
By making use of Proposition \ref{sigmafini}, we get $Z=\{0\}$, which completes the proof.
\end{proof}

\begin{remark}
	There was an attempt in \cite{arati_orthonormality} for finding a discrete Calder\'{o}n type condition and prove the completion of the wavelet orthonormal system on the Heisenberg group. However, the result turned out to be negative. The present result of this paper gives us a hope that one can obtain an analogue of the discrete Calder\'{o}n condition and get an orthonormal wavelet basis for $L^{2}(\mathbb{H})$ in future. We leave this as an open problem to the interested readers. 
\end{remark}

\section*{Acknowledgement}

\textbf{Funding} The author (R .R) thanks NBHM, DAE, India for the research project grant. The author (R .V) thanks University Grants Commission, India for the financial support.\\

\bibliographystyle{amsplain}
\bibliography{twisted}

\providecommand{\bysame}{\leavevmode\hbox to3em{\hrulefill}\thinspace}
\providecommand{\MR}{\relax\ifhmode\unskip\space\fi MR }
\providecommand{\MRhref}[2]{%
  \href{http://www.ams.org/mathscinet-getitem?mr=#1}{#2}
}
\providecommand{\href}[2]{#2}
\begin{thebibliography}{10}

\bibitem{arati_orthonormality}
S.~Arati and R.~Radha, \emph{Orthonormality of wavelet system on the
  {H}eisenberg group}, J. Math. Pures Appl. (9) \textbf{131} (2019), 171--192.
  \MR{4021173}

\bibitem{CN}
O.~Christensen, \emph{An introduction to frames and {R}iesz bases}, second ed.,
  Applied and Numerical Harmonic Analysis, Birkh\"{a}user/Springer, [Cham],
  2016. \MR{3495345}

\bibitem{follandphase}
G.~B. Folland, \emph{Harmonic analysis in phase space}, Annals of Mathematics
  Studies, vol. 122, Princeton University Press, Princeton, NJ, 1989.
  \MR{983366}

\bibitem{Gr}
G.~Gripenberg, \emph{A necessary and sufficient condition for the existence of
  a father wavelet}, Studia Math. \textbf{114} (1995), no.~3, 207--226.
  \MR{1338828}

\bibitem{Heilbook}
C.~Heil, \emph{A basis theory primer}, expanded ed., Applied and Numerical
  Harmonic Analysis, Birkh\"{a}user/Springer, New York, 2011. \MR{2744776}

\bibitem{weiss}
E.~Hern\'{a}ndez and G.~Weiss, \emph{A first course on wavelets}, Studies in
  Advanced Mathematics, CRC Press, Boca Raton, FL, 1996, With a foreword by
  Yves Meyer. \MR{1408902}

\bibitem{shannon}
A.~Mayeli, \emph{Shannon multiresolution analysis on the {H}eisenberg group},
  J. Math. Anal. Appl. \textbf{348} (2008), no.~2, 671--684. \MR{2445768}

\bibitem{RHG}
R.~Radha and Saswata Adhikari, \emph{Shift-invariant spaces with countably many
  mutually orthogonal generators on the {H}eisenberg group}, Houston J. Math.
  \textbf{46} (2020), no.~2, 435--463. \MR{4195268}

\bibitem{rabeetha_2}
R.~Ramakrishnan and R.~Velsamy, \emph{Zak transform associated with the {W}eyl
  transform and the system of twisted translates on {$\Bbb {R}^{2n}$}}, Results
  Math. \textbf{79} (2024), no.~2, Paper No. 65, 26. \MR{4688368}

\bibitem{zethesis}
Z.~Rzeszotnik, \emph{Characterization theorems in the theory of wavelets},
  ProQuest LLC, Ann Arbor, MI, 2000, Thesis (Ph.D.)--Washington University in
  St. Louis. \MR{2700734}

\bibitem{ze}
Z.~Rzeszotnik, \emph{Calder\'{o}n's condition and wavelets}, Collect. Math.
  \textbf{52} (2001), no.~2, 181--191. \MR{1852037}

\bibitem{thangavelu}
S.~Thangavelu, \emph{Harmonic analysis on the {H}eisenberg group}, Progress in
  Mathematics, vol. 159, Birkh\"{a}user Boston, Inc., Boston, MA, 1998.
  \MR{1633042}

\bibitem{rabeetha_3}
R.~Velsamy and R.~Ramakrishnan, \emph{Twisted shift preserving operators on
  {$L^{2}(\mathbb{R}^{2n})$}}, arXiv:2308.13238.

\bibitem{wa}
X.~Wang, \emph{The study of wavelets from the properties of their {F}ourier
  transforms}, ProQuest LLC, Ann Arbor, MI, 1995, Thesis (Ph.D.)--Washington
  University in St. Louis. \MR{2692675}

\end{thebibliography}
\end{document}